%% file: main.tex
\documentclass[11pt,reqno]{amsart}

\usepackage[T1]{fontenc}
\usepackage[a4paper,margin=0.9in]{geometry}
\usepackage{amsmath,amssymb,amsthm,mathtools}
\usepackage{enumitem}
\usepackage{microtype}
\usepackage[hidelinks]{hyperref}

\hypersetup{
  pdftitle={Endpoint Energy Atoms Force Local Pressure Concentration in Three-Dimensional Navier--Stokes Flow},
  pdfauthor={Hao Huang},
  pdfsubject={Endpoint kinetic-energy atoms and local pressure concentration},
  pdfkeywords={Navier--Stokes equations, endpoint energy atom, pressure concentration, Oseen evolution, Hardy space, BMO}
}

\allowdisplaybreaks
\numberwithin{equation}{section}

\newtheorem{theorem}{Theorem}[section]
\newtheorem{proposition}[theorem]{Proposition}
\newtheorem{lemma}[theorem]{Lemma}
\newtheorem{corollary}[theorem]{Corollary}
\newtheorem{definition}[theorem]{Definition}
\newtheorem{maintheorem}{Theorem}

\newtheorem{mainlocal}{Theorem}

\theoremstyle{remark}
\newtheorem{remark}[theorem]{Remark}

\newcommand{\T}{\mathbb T}
\newcommand{\R}{\mathbb R}
\newcommand{\Pp}{\mathbb P}
\newcommand{\Qq}{\mathbb Q}
\newcommand{\Hh}{\mathcal H}
\newcommand{\Aa}{\mathfrak A}
\newcommand{\BMO}{\mathrm{BMO}}
\newcommand{\dd}{\,\mathrm d}
\newcommand{\norm}[1]{\left\lVert#1\right\rVert}
\newcommand{\abs}[1]{\left|#1\right|}
\newcommand{\ip}[2]{\left\langle#1,#2\right\rangle}
\newcommand{\weakstarto}{\stackrel{*}{\rightharpoonup}}

\title[Endpoint Atoms and Pressure Concentration]{Endpoint Energy Atoms Force Local Pressure Concentration in Three-Dimensional Navier--Stokes Flow}
\author{Hao Huang}
\thanks{Corresponding author: huanghao92@skku.edu.}
\address{Department of Computer Science and Engineering, Sungkyunkwan University,
Suwon-si 16419, Gyeonggi-do, Republic of Korea}
\address{School of Mathematics and Statistics, Linyi University,
Linyi 276000, China}
\email{huanghao92@skku.edu}

\subjclass[2020]{Primary 35Q30, 35B44; Secondary 35K91, 42B30, 76D05}
\keywords{Navier--Stokes equations, endpoint energy atom, pressure concentration, Oseen evolution, Leray projection, Hardy--BMO duality}

\begin{document}

\begin{abstract}
We prove that a point atom in an endpoint kinetic-energy measure of a
three-dimensional incompressible Navier--Stokes flow on the flat torus forces
quantitative concentration of the actual pressure. At each restart time
\(\tau\), a same-state constraint--response comparison evolves \(u(\tau)\)
by the pressure-free componentwise advection--diffusion equation driven by
\(u\), and subtracts the resulting passive field \(z_\tau\).
Writing \(r_\tau=u-z_\tau\) and \(g_\tau=\Qq z_\tau\), we obtain the exact
relative pressure-work identity
\[
 \frac12\norm{r_\tau(t)}_2^2
 +\nu\int_\tau^t\norm{\nabla r_\tau}_2^2\dd\rho
 =\int_\tau^t\ip{\nabla p}{g_\tau}\dd\rho,
\]
where \(p\) is the Navier--Stokes pressure. If the atom has mass \(m\), Nash
smoothing makes \(z_\tau\) terminally non-atomic while \(r_\tau\) retains
terminal atomic mass at least \(m\); hence, after every fixed restart, the
terminal limsup of the accumulated pressure work is at least \(m/2\).
Mesoscopic localization yields
scale-explicit lower bounds for gauge-invariant \(L^2\) pressure oscillation
and for the local
\(L^2\) mass of the pressure gradient in shrinking terminal cylinders.
Consequently, on every terminal neighborhood of the atomic point, the
pressure modulo functions of time and its gradient fail to be square
integrable. The relative pressure-work principle extends to constrained
solenoidal Oseen evolutions: as
\(\tau\uparrow T_*\), the zero-initial responses vanish in weak-tail
topologies, although every fixed restart retains the same lower bound. This
identifies a necessary pressure-work mechanism, generated by the
incompressibility constraint, that links endpoint atoms to local pressure
concentration, without requiring a full regularity hypothesis.
\end{abstract}

\maketitle

\input{sections/introduction}
\input{sections/setting}
\input{sections/passive}
\input{sections/ledger}
\input{sections/atom}
\input{sections/negative}
\input{sections/localization}
\input{sections/application}
\input{sections/hardy}
\input{sections/sharpness}
\appendix
\input{sections/appendices}

\section*{Funding}
This research did not receive any specific grant from funding agencies in the
public, commercial, or not-for-profit sectors.

\section*{Declaration of competing interest}
The author declares no competing interests.

\section*{Data availability}
No data were used for the research described in this article.

\bibliographystyle{amsplain}
\bibliography{references}

\end{document}

%% file: sections/introduction.tex
\section{Introduction}
\label{sec:introduction}

Pressure is invisible in the global kinetic-energy balance of incompressible
flow. Yet we prove that if endpoint kinetic energy develops a point atom, the
actual Navier--Stokes pressure is forced to concentrate quantitatively in
every terminal neighborhood of that point. The result does not assert that
such an atom exists; it identifies a necessary local response of the equations
if one does. A point atom is the strongest loss of spatial compactness
compatible with the energy bound: it records a positive amount of kinetic
energy persisting at arbitrarily small scales, rather than merely a large
critical norm or a lower-dimensional singular set.

Let \(u\) be a Leray--Hopf solution of the unforced incompressible
Navier--Stokes equations on the flat three-dimensional torus, smooth before
a finite terminal time \(T_*\):
\begin{equation}
 \partial_tu+(u\cdot\nabla)u+\nabla p=\nu\Delta u,
 \qquad \nabla\cdot u=0.
 \label{eq:NS-intro}
\end{equation}
Although \(u\) is smooth for \(t<T_*\), its kinetic-energy densities may
lose strong compactness as \(t\uparrow T_*\). Weak-* limits of
\(\abs{u(t)}^2\dd x\) retain the concentration information lost by weak
\(L^2\) convergence. Energy measures and their concentration dimension were
introduced by Shvydkoy and developed systematically by Leslie and Shvydkoy
\cite{Shvydkoy2013,LeslieShvydkoy2018}; microlocal concentration was studied
by Arnold and Craig \cite{ArnoldCraig2010}. These viewpoints complement the
Leray theory of finite-energy solutions \cite{Leray1934}, partial regularity
\cite{Scheffer1976,CaffarelliKohnNirenberg1982}, endpoint regularity criteria
\cite{EscauriazaSereginSverak2003}, and quantitative concentration of
critical norms near possible singularities
\cite{BarkerPrange2020,BarkerPrange2021,MaekawaMiuraPrange2020}. Under a
Type-I condition, Chae and Wolf exclude atomic energy concentration for the
three-dimensional Euler equations \cite{ChaeWolf2020}. None of these results
identifies a necessary local mechanism in the Navier--Stokes pressure when
an endpoint kinetic-energy atom is present.

Pressure is the natural constraint force, but it is invisible in the global
kinetic-energy balance because gradients are orthogonal to divergence-free
velocity. Direct localization does not resolve the difficulty. At a late
restart, the velocity already contains the concentration accumulated before
that time, so an ordinary local energy balance cannot distinguish inherited
energy from the response generated after the restart. Existing local
pressure decompositions, pressure projections, and pressure-based
regularity criteria control pressure under additional assumptions
\cite{Wolf2017,ChamorroLemarieRieussetMayoufi2018,GuevaraPhuc2017}; they do
not derive a compulsory local pressure response from a hypothetical endpoint
atom.

We isolate that response by a controlled same-state comparison that cancels
the inherited state exactly. Suppose that an endpoint energy measure has an
atom of mass \(m>0\) at \(a\). At each
fixed restart time \(\tau\in(t_b,T_*)\), let \(z_\tau\) solve the
componentwise passive problem
\begin{equation}
 \partial_tz_\tau+(u\cdot\nabla)z_\tau=\nu\Delta z_\tau,
 \qquad z_\tau(\tau)=u(\tau).
 \label{eq:passive-intro}
\end{equation}
The Navier--Stokes velocity continues under the incompressibility constraint,
whereas \(z_\tau\) is transported and diffused componentwise under the same
drift and without pressure. Define
\begin{equation}
 g_\tau=\Qq z_\tau,
 \qquad r_\tau=u-z_\tau,
 \label{eq:NS-responses-intro}
\end{equation}
where \(\Qq\) is the potential Hodge projection. The two response fields
start from zero: \(g_\tau(\tau)=r_\tau(\tau)=0\). Because the two evolutions
have the same drift and the same restart state, the only changed ingredient is
the incompressibility constraint; their difference therefore isolates its
dynamical contribution. Passive smoothing makes
\(z_\tau\) terminally diffuse, so subtracting it removes the common initial
state and leaves terminal atomic mass at least \(m\) in the relative response.

The key identity is
\begin{equation}
 \frac12\norm{r_\tau(t)}_2^2
 +\nu\int_\tau^t\norm{\nabla r_\tau(\rho)}_2^2\dd\rho
 =\int_\tau^t\ip{\nabla p(\rho)}{g_\tau(\rho)}\dd\rho.
 \label{eq:identity-intro}
\end{equation}
The instantaneous integrand on the right need not have a sign, but its
accumulation from the common initial state is nonnegative because it equals
the two square terms on the left. Drift-independent Nash smoothing makes
\(z_\tau\) terminally non-atomic for every fixed restart, whereas
\(r_\tau\) retains terminal atomic mass at least \(m\). Hence
\begin{equation}
 \limsup_{t\uparrow T_*}
 \int_\tau^t\ip{\nabla p}{g_\tau}\dd\rho\ge\frac m2
 \label{eq:work-lower-intro}
\end{equation}
for every fixed \(\tau\in(t_b,T_*)\). Thus, after every fixed restart, the
terminal limsup of the accumulated pressure work is at least \(m/2\).

Our first main result, Theorem~\ref{thm:NS-pressure}, converts this global
pressure-work lower bound into local pressure concentration. Given any
\(R_n\downarrow0\), we first choose restart times \(\tau_n\uparrow T_*\) for
which the accumulated transport and diffusion lengths are negligible relative
to \(R_n\); with \(\tau_n\) fixed, atom inheritance then selects
\(t_n\uparrow T_*\). This order
\(R_n\longrightarrow\tau_n\longrightarrow t_n\) is part of the conclusion;
no limits are exchanged. A localized form of
\eqref{eq:identity-intro}, including the pressure flux created by the cutoff,
gives
\begin{equation}
 \liminf_{n\to\infty}
 \int_{\tau_n}^{t_n}\int
 \chi_{R_n}\nabla p\cdot g_{\tau_n}\dd x\dd t
 \ge\frac m2.
 \label{eq:local-work-intro}
\end{equation}
The mesoscopic restriction is intrinsic to this argument: when the cutoff
radius is comparable with the effective transport or diffusion length, the
boundary fluxes can be of the same order as the interior work, and the local
identity no longer separates the two.
Cauchy--Schwarz duality against
\(\nabla\cdot(\chi_{R_n}g_{\tau_n})\) yields a gauge-invariant
pressure-oscillation lower bound. The ball Poincar\'e inequality then converts
this oscillation bound into
\begin{equation}
 \liminf_{n\to\infty}
 R_n^2\int_{\tau_n}^{T_*}\int_{B_{2R_n}(a)}
 \abs{\nabla p}^2\dd x\dd t>0.
 \label{eq:pressure-gradient-intro}
\end{equation}
Thus, on every terminal neighborhood of \(a\), the pressure modulo functions
of time and its gradient fail to be square integrable. Equivalently, local
\(L^2\) control of either quantity excludes an endpoint point atom.

At temporal exponent \(q=2\), the atom-exclusion hypothesis is stronger than
the pressure integrability supplied by the energy class but weaker than
scale-critical pressure control. Indeed, \(u\in L_t^4L_x^3\) implies that the
zero-mean pressure belongs to \(L_t^2L_x^{3/2}\), whereas the criterion asks
for the local gain \(p-\beta(t)\in L_t^2L_x^2\). Under the Navier--Stokes
scaling \(p_\lambda(x,t)=\lambda^2p(\lambda x,\lambda^2t)\),
\(L_t^qL_x^r\) pressure control is critical when \(2/q+3/r=2\). Thus
\((q,r)=(2,2)\) remains supercritical because \(5/2>2\), while at \(r=2\)
the critical temporal exponent is \(q=4\). The result therefore separates
atomic energy concentration from the full singularity problem and supplies a
concrete intermediate target for pressure estimates, distinct from
pressure-oscillation criteria designed to prove regularity of the whole solution
\cite{GuevaraPhuc2017,Wolf2017,ChamorroLemarieRieussetMayoufi2018}.

The Navier--Stokes theorem is the specialization \(w=u\),
\(\nabla\pi_w=\nabla p\), of a general same-state constraint--response
principle. For a divergence-free drift with an integrable \(L^\infty\)
terminal tail and a constrained solenoidal advection--diffusion solution
carrying a terminal point atom, Theorem~\ref{thm:main} shows that, for each
fixed restart, \(r_\tau\) obeys the exact pressure-work identity,
\(r_\tau\) and \(c_\tau\) retain terminal atomic mass at least \(m\), and the
terminal work limsup is at least \(m/2\). At the same time, all three response
fields vanish in negative and time-integrated energy topologies as
\(\tau\uparrow T_*\).
Theorem~\ref{thm:localization} gives the mesoscopic form, concentrating this
work along ordered, radius-dependent restarts. These conclusions use only the
common-drift constrained/passive structure, not the Navier--Stokes
nonlinearity.

The work also admits a Hardy--BMO representation: each component of
\(w\cdot\nabla(\Qq z_\tau)\) is a periodic div--curl product, and the pairing
is invariant under spatially constant transport. A concentration example
shows why the separate weak-tail bounds alone do not force the signed pairing
to vanish. Nonautonomous Oseen and constrained drift--diffusion systems
provide the natural comparison framework
\cite{SilvestreVicol2012,HanselRhandi2014,AsamiHishida2025}.

A related preprint studies the same endpoint-atom setting through packet
genealogies, common backward adjoints, and saturation, obtaining same-parent
full-tail rigidity and divergence of delayed Oseen action
\cite{HuangFullTail2026}. Here we instead take \(w=u\) directly and establish
the actual-pressure square identity, its mesoscopic localization, and local
pressure non-integrability.

Section~\ref{sec:setting} states the principal Navier--Stokes theorem and the
global and mesoscopic forms of the relative pressure-work principle.
Section~\ref{sec:passive} proves passive
diffuseness. Sections~\ref{sec:ledger}--\ref{sec:negative} establish the
relative pressure-work identity, atom inheritance, and weak-tail estimates.
Section~\ref{sec:localization} proves the mesoscopic pressure bounds.
Section~\ref{sec:application} then proves the Navier--Stokes specialization.
Section~\ref{sec:hardy} gives the Hardy--BMO representation, and
Section~\ref{sec:sharpness} establishes the sharpness of the weak-tail
estimates and records the relation between restart times.

%% file: sections/setting.tex
\section{Setting and main results}
\label{sec:setting}

Let \(\Omega=\T^3=(\R/2\pi\mathbb Z)^3\), and fix \(\nu>0\) and
\(-\infty<t_b<s<T_*<\infty\). The Leray projector \(\Pp\) is the orthogonal
projection of periodic vector fields onto divergence-free fields. We take
\(\Pp\) to be the identity on the zero Fourier mode and put
\(\Qq=I-\Pp\), so \(\Qq\) annihilates constants. Both projectors commute
with spatial derivatives and the heat semigroup.

The prescribed drift \(u\) is real-valued, divergence-free, smooth on every
compact subinterval of \([t_b,T_*)\), and satisfies
\begin{equation}
 A(\sigma):=\int_\sigma^{T_*}\norm{u(t)}_\infty\dd t<\infty,
 \qquad \sigma\in[t_b,T_*).
 \label{eq:A-finite}
\end{equation}
For a smooth preterminal Navier--Stokes branch, this follows from the
Foias--Guillop\'e--Temam higher-derivative estimate and Agmon interpolation;
see Appendix~\ref{sec:NS-tail} and \cite{FoiasGuillopeTemam1981}.

Here \(L^2_\sigma(\Omega)\) denotes the closed subspace of divergence-free
vector fields in \(L^2(\Omega;\R^3)\). For
\(f\in L^2_\sigma(\Omega)\), let
\begin{equation}
 w(t)=U(t,s)f
 \label{eq:U-def}
\end{equation}
be the energy solution of
\begin{equation}
 \partial_tw+(u\cdot\nabla)w+\nabla\pi_w=\nu\Delta w,
 \qquad \nabla\cdot w=0,
 \qquad w(s)=f.
 \label{eq:Oseen}
\end{equation}
The pressure has zero spatial mean and obeys
\begin{equation}
 \Delta\pi_w=-\partial_i\partial_j(u_jw_i),
 \qquad
 \pi_w=R_iR_j(u_jw_i),
 \qquad
 \nabla\pi_w=-\Qq[(u\cdot\nabla)w].
 \label{eq:pressure}
\end{equation}
Here \(R_i=\partial_i(-\Delta)^{-1/2}\) on nonzero Fourier modes, and
repeated spatial indices are summed. For a zero-mean distribution \(q\), we
use
\(
 \norm{q}_{\dot H^{-1}}^2
 =\sum_{k\in\mathbb Z^3\setminus\{0\}}\abs{k}^{-2}\abs{\widehat q(k)}^2
\).

The componentwise passive propagator \(S(t,\tau)\) is defined by
\begin{equation}
 \partial_tz+(u\cdot\nabla)z=\nu\Delta z,
 \qquad z(\tau)=h.
 \label{eq:passive}
\end{equation}
No divergence constraint is imposed for \(t>\tau\).

\begin{definition}[Terminal atomic mass]
\label{def:atom}
For an \(L^2\)-bounded field \(q(t)\), a point \(a\in\Omega\), and a terminal
time \(T_*\), define
\begin{equation}
 \Aa_a[q]
 :=\lim_{\rho\downarrow0}\limsup_{t\uparrow T_*}
 \int_{B_\rho(a)}\abs{q(t,x)}^2\dd x.
 \label{eq:atomic-functional}
\end{equation}
We call \(q\) terminally non-atomic when \(\Aa_a[q]=0\) for every \(a\).
\end{definition}

For a scalar field \(q\), a time interval \(I\), and a ball \(B\subset
\Omega\), define its gauge-invariant space--time oscillation by
\begin{equation}
 \operatorname{Osc}_2(q;I\times B)
 :=\left[
 \int_I\inf_{\beta\in\R}
 \norm{q(t)-\beta}_{L^2(B)}^2\dd t
 \right]^{1/2},
 \label{eq:osc-def}
\end{equation}
with the value \(+\infty\) allowed.

\begin{maintheorem}[Navier--Stokes endpoint atoms force local pressure concentration]
\label{thm:NS-pressure}
Let \(u\) be a Leray--Hopf solution of \eqref{eq:NS-intro} that is smooth
on every compact subinterval of \([t_b,T_*)\). Suppose that for some sequence
\(t_k\uparrow T_*\),
\begin{equation}
 \abs{u(t_k,x)}^2\dd x\weakstarto\mu_*,
 \qquad \mu_*(\{a\})=m>0.
 \label{eq:NS-endpoint-atom}
\end{equation}
For \(\tau\in(t_b,T_*)\), let
\begin{equation}
 z_\tau=S(t,\tau)u(\tau),\qquad
 g_\tau=\Qq z_\tau,\qquad
 r_\tau=u-z_\tau.
 \label{eq:NS-restart-fields}
\end{equation}
Then, for every fixed \(\tau\in(t_b,T_*)\) and every \(t\in(\tau,T_*)\),
\begin{equation}
 \frac12\norm{r_\tau(t)}_2^2
 +\nu\int_\tau^t\norm{\nabla r_\tau(\rho)}_2^2\dd\rho
 =\int_\tau^t\ip{\nabla p(\rho)}{g_\tau(\rho)}\dd\rho,
 \label{eq:NS-main-ledger}
\end{equation}
and
\begin{equation}
 \limsup_{t\uparrow T_*}
 \int_\tau^t\ip{\nabla p}{g_\tau}\dd\rho\ge\frac m2.
 \label{eq:NS-work-lower}
\end{equation}
More quantitatively, fix \(s\in(t_b,T_*)\) and let \(R_n\downarrow0\).
There are \(\tau_n\uparrow T_*\), with \(\tau_n>s\), such that
\begin{equation}
 \frac{A(\tau_n)}{R_n}
 +\frac{\sqrt{\nu(T_*-\tau_n)}}{R_n}\longrightarrow0,
 \label{eq:NS-main-schedule}
\end{equation}
\begin{equation}
 \liminf_{n\to\infty}
 \operatorname{Osc}_2\!\left(
 p;(\tau_n,T_*)\times B_{2R_n}(a)\right)
 \ge c_\Omega\frac{m\sqrt\nu}{\norm{u(s)}_2},
 \label{eq:NS-main-osc-quant}
\end{equation}
and
\begin{equation}
 \liminf_{n\to\infty}
 R_n^2\int_{\tau_n}^{T_*}\int_{B_{2R_n}(a)}
 \abs{\nabla p}^2\dd x\dd t
 \ge c_\Omega\frac{m^2\nu}{\norm{u(s)}_2^2}.
 \label{eq:NS-main-grad-quant}
\end{equation}
In particular, for every \(t_0\in(t_b,T_*)\) and every sufficiently small
\(r>0\),
\begin{equation}
 \operatorname{Osc}_2
 \bigl(p;(t_0,T_*)\times B_r(a)\bigr)=+\infty
 \label{eq:NS-pressure-not-L2}
\end{equation}
and
\begin{equation}
 \int_{t_0}^{T_*}\int_{B_r(a)}\abs{\nabla p}^2\dd x\dd t=+\infty.
 \label{eq:NS-gradient-not-L2}
\end{equation}
Thus local square integrability of either the pressure modulo functions of
time or its gradient on one terminal neighborhood excludes a point atom of
the endpoint kinetic-energy measure at its center.
\end{maintheorem}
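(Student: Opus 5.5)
The plan is to specialize the general constraint--response machinery to the drift \(w=u\): the Navier--Stokes velocity is itself a solution of \eqref{eq:Oseen} with drift \(u\), initial time \(s\), and \(\nabla\pi_u=\nabla p\). The tail bound \eqref{eq:A-finite} is supplied by Appendix~\ref{sec:NS-tail}.

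\textbf{Identity.} Since \(z_\tau\) solves \eqref{eq:passive} with the same drift and the same data \(u(\tau)\), the difference \(r_\tau=u-z_\tau\) satisfies
\[
 \partial_tr_\tau+(u\cdot\nabla)r_\tau-\nu\Delta r_\tau=-\nabla p,
 \qquad r_\tau(\tau)=0.
\]
We test this equation with \(r_\tau\). The transport term vanishes because \(\nabla\cdot u=0\). For the pressure term, \(\ip{\nabla p}{u}=0\), and hence
\[
 -\ip{\nabla p}{r_\tau}=\ip{\nabla p}{z_\tau}=\ip{\nabla p}{\Qq z_\tau}=\ip{\nabla p}{g_\tau}.
\]
Integrating in time from \(\tau\) to \(t\) gives \eqref{eq:NS-main-ledger}. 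All steps are justified because the fields are smooth on \([\tau,t]\subset[t_b,T_*)\).

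\textbf{Work lower bound.} Fix \(\tau\). We invoke the passive diffuseness result of Section~\ref{sec:passive}: Nash/De Giorgi smoothing, which is uniform in divergence-free drift, gives \(\Aa_a[z_\tau]=0\). We then combine this with \(\Aa_a[u]\ge m\), which follows from \eqref{eq:NS-endpoint-atom}. For \(\rho\) small,
\[
 \int_{B_\rho}|r_\tau|^2\ge\Bigl(\Bigl(\int_{B_\rho}|u|^2\Bigr)^{1/2}-\Bigl(\int_{B_\rho}|z_\tau|^2\Bigr)^{1/2}\Bigr)^2 .
\]
This yields \(\limsup_{t\uparrow T_*}\tfrac12\norm{r_\tau(t)}_2^2\ge m/2\). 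Together with \eqref{eq:NS-main-ledger} this gives \eqref{eq:NS-work-lower}, which is Theorem~\ref{thm:main} with \(w=u\).

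\textbf{Mesoscopic bounds.} We apply Theorem~\ref{thm:localization} with \(w=u\), \(f=u(s)\). First choose \(\tau_n\uparrow T_*\) so that \eqref{eq:NS-main-schedule} holds. This is possible since \(A(\tau)\to0\) and \(T_*-\tau\to0\), so for each \(R_n\) we may pick \(\tau_n\) late enough. Next, with \(\tau_n\) fixed, select \(t_n\) from the atom inheritance so that the localized ledger gives \eqref{eq:local-work-intro}. The cutoff commutator terms are the fluxes \(\nabla\chi_{R_n}\cdot(u|r|^2/2+\nu\nabla|r|^2/2+pr)\). They are controlled by \(A(\tau_n)/R_n\) and \(\sqrt{\nu(T_*-\tau_n)}/R_n\), which is why the schedule is imposed before \(t_n\).

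Integrating by parts and using that \(\nabla\cdot(\chi g)\) has zero mean, we get
\[
 \int\!\!\int\chi\nabla p\cdot g=-\int\!\!\int(p-\beta(t))\,\nabla\cdot(\chi_{R_n}g_{\tau_n}).
\]
Cauchy--Schwarz then bounds the left side by \(\operatorname{Osc}_2\) times \(\norm{\nabla\cdot(\chi g)}_{L^2_{t,x}}\). The latter is \(\lesssim\norm{\nabla z_{\tau_n}}_{L^2_{t,x}}+R_n^{-1}\norm{g}_{L^2(\text{support})}\). By the passive energy identity, \(\nu\int\norm{\nabla z}_2^2\le\tfrac12\norm{u(\tau_n)}_2^2\le\tfrac12\norm{u(s)}_2^2\). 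The \(R_n^{-1}\) term is absorbed using the mesoscopic smallness and the weak-tail estimates of Section~\ref{sec:negative}. This gives \eqref{eq:NS-main-osc-quant}. The Poincaré inequality on \(B_{2R_n}\), \(\inf_\beta\norm{p-\beta}_{L^2(B)}\le C R_n\norm{\nabla p}_{L^2(B)}\), converts it into \eqref{eq:NS-main-grad-quant}.

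\textbf{Non-integrability.} Given \(t_0\) and \(r\), we have \(B_{2R_n}\subset B_r\) and \(\tau_n>t_0\) eventually. Then \(\operatorname{Osc}_2\) over the smaller cylinder is bounded by that over the larger one, giving \eqref{eq:NS-pressure-not-L2}. Likewise, \(\int\!\!\int_{B_r}|\nabla p|^2\ge cR_n^{-2}\to\infty\), giving \eqref{eq:NS-gradient-not-L2}. The quantitative oscillation bound alone does not blow up, so for \eqref{eq:NS-pressure-not-L2} we instead use the localized work with \(\nabla\cdot(\chi g)\). Its \(L^2\) norm on \((\tau_n,T_*)\) tends to zero by the weak-tail vanishing, while the work stays \(\ge m/2\); this forces the oscillation to diverge.

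\textbf{Main obstacle.} I expect the hardest step to be controlling the cutoff flux terms and the \(R_n^{-1}\norm{g}\) term in the localized ledger. This needs the ordered choice \(R_n\to\tau_n\to t_n\), and the flux \(\nabla\chi\, p\, r\) term requires a pressure estimate that is not circular.
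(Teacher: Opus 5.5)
\paragraph{The gap: the oscillation non-integrability step.} Your identity, the work lower bound, the schedule $R_n\to\tau_n\to t_n$, the localized ledger, the quantitative bound \eqref{eq:NS-main-osc-quant}, the Poincar\'e step and the gradient divergence \eqref{eq:NS-gradient-not-L2} all follow the paper's route and are sound. The genuine gap is in \eqref{eq:NS-pressure-not-L2}.

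You assert that the quantitative oscillation bound ``does not blow up''. You then replace it by the claim that $\norm{\nabla\cdot(\chi_{R_n}g_{\tau_n})}_{L^2((\tau_n,T_*)\times\Omega)}\to0$ ``by the weak-tail vanishing''. That claim is not available. The weak-tail estimates control $g_\tau$ only in $L^\infty_t\dot H^{-1}_x\cap L^4_tL^2_x$. Together with the schedule, this handles the $R_n^{-1}\norm{g_{\tau_n}}_{L^2_{t,x}}$ part, but it says nothing about $\nabla g_{\tau_n}$. The only bound on $\norm{\nabla g_\tau}_{L^2_{t,x}}$ is the energy bound $F_\tau/\sqrt{2\nu}$ from \eqref{eq:g-gradient}. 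That bound is of order one uniformly in $\tau$, and nothing in the weak-tail estimates improves it. This non-vanishing term is exactly why \eqref{eq:div-cutoff-g} produces the finite constant $c_\Omega m\sqrt\nu/\norm{u(s)}_2$ instead of divergence. So your substitute argument does not prove \eqref{eq:NS-pressure-not-L2}.

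\paragraph{The fix: absolute continuity, not monotonicity.} The missing idea is that a fixed positive lower bound on cylinders of vanishing measure is already incompatible with square integrability. Your monotonicity comparison went the wrong way: it only gives a positive lower bound on the large cylinder.

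Argue by contradiction. Suppose $\operatorname{Osc}_2(p;(t_0,T_*)\times B_r(a))<\infty$, and let $\beta(t)$ be the mean of $p(t)$ on $B_r(a)$, so that $p-\beta\in L^2((t_0,T_*)\times B_r(a))$. Apply the quantitative result with $s=t_0$, so that $\tau_n>t_0$, and take $2R_n<r$. For each time, the infimum over constants on $B_{2R_n}(a)$ is at most the value at $\beta(t)$. Hence
\[
 \operatorname{Osc}_2\bigl(p;(\tau_n,T_*)\times B_{2R_n}(a)\bigr)
 \le\norm{p-\beta}_{L^2((\tau_n,T_*)\times B_{2R_n}(a))}\longrightarrow0,
\]
because the cylinders have measure tending to zero. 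This contradicts \eqref{eq:NS-main-osc-quant}.

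\paragraph{The flux term you flagged.} With the work defined as $\int\!\!\int\chi_R\nabla p\cdot g_\tau$, the pressure flux is $\int\!\!\int p\,c_\tau\cdot\nabla\chi_R$ with $c_\tau=r_\tau+g_\tau$, not $p\,r_\tau$. It is controlled non-circularly by $\norm{p(t)}_2\le C_\Omega\norm{u(t)}_\infty\norm{u(t)}_2$, which follows from $p=R_iR_j(u_iu_j)$. Together with $\norm{c_\tau}_2\le 2F_\tau$, this gives a bound $C F_\tau^2A(\tau)/R$, which vanishes under the schedule.
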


Theorem~\ref{thm:NS-pressure} is the Navier--Stokes specialization of a
same-drift constrained/passive principle. The next result isolates that
principle without using the nonlinear identity \(w=u\).

For a fixed Oseen solution \(w\) and each fixed restart time
\(\tau\in(s,T_*)\), set
\begin{equation}
 \begin{aligned}
 f_\tau&=w(\tau),& F_\tau&=\norm{f_\tau}_2,\\
 z_\tau(t)&=S(t,\tau)f_\tau,&
 v_\tau&=\Pp z_\tau,\\
 g_\tau&=\Qq z_\tau,&
 c_\tau&=w-v_\tau,\\
 r_\tau&=w-z_\tau=c_\tau-g_\tau.
 \end{aligned}
 \label{eq:restart-definitions}
\end{equation}
All three response fields \(r_\tau,c_\tau,g_\tau\) start from zero, while
\(v_\tau(\tau)=w(\tau)\). The Hodge relations are
\begin{equation}
 \Pp r_\tau=c_\tau,
 \qquad
 \Qq r_\tau=-g_\tau,
 \qquad
 c_\tau=r_\tau+g_\tau.
 \label{eq:Hodge-relations}
\end{equation}

\begin{maintheorem}[Relative pressure-work principle for a common drift]
\label{thm:main}
Let \(u,w\) satisfy \eqref{eq:A-finite}--\eqref{eq:Oseen}, and suppose
\begin{equation}
 \Aa_a[w]\ge m>0
 \label{eq:w-atom}
\end{equation}
for some \(a\in\Omega\). Then the following statements hold for every fixed
\(\tau\in(s,T_*)\).

\begin{enumerate}[label=\textup{(\roman*)},leftmargin=2.4em]
\item The passive field \(z_\tau\) and its Hodge components
\(v_\tau,g_\tau\) are
terminally non-atomic, whereas
\begin{equation}
 \Aa_a[r_\tau]\ge m,
 \qquad
 \Aa_a[c_\tau]\ge m.
 \label{eq:atom-inheritance-main}
\end{equation}

\item For every \(\tau<t<T_*\),
\begin{equation}
 \begin{aligned}
 \frac12\norm{r_\tau(t)}_2^2
 +\nu\int_\tau^t\norm{\nabla r_\tau(\rho)}_2^2\dd\rho
 &=\int_\tau^t\ip{\nabla\pi_w(\rho)}{g_\tau(\rho)}\dd\rho\\
 &=-\int_\tau^t\int_\Omega
 \pi_w\,\nabla\cdot z_\tau\dd x\dd\rho\\
 &=\int_\tau^t\int_\Omega
 u_iw_j\partial_j(g_\tau)_i\dd x\dd\rho.
 \end{aligned}
 \label{eq:main-ledger}
\end{equation}

\item The accumulated work and its product majorant satisfy
\begin{equation}
 \limsup_{t\uparrow T_*}
 \int_\tau^t\ip{\nabla\pi_w}{g_\tau}\dd\rho
 \ge\frac m2,
 \label{eq:main-work-lower}
\end{equation}
and
\begin{equation}
 \int_\tau^{T_*}
 \norm{u(t)}_\infty\norm{\nabla g_\tau(t)}_2\dd t
 \ge\frac{m}{2F_\tau}
 \ge\frac{m}{2\norm{f}_2}.
 \label{eq:main-product-lower}
\end{equation}
The first integral in \eqref{eq:main-product-lower} is understood in
\([0,+\infty]\).

\item For the tail function \(A\) in \eqref{eq:A-finite} and
\(q_\tau\in\{r_\tau,g_\tau,c_\tau\}\),
\begin{equation}
 \sup_{\tau<t<T_*}\norm{q_\tau(t)}_{\dot H^{-1}}
 \le C_\Omega F_\tau A(\tau)
 \label{eq:main-Hminus}
\end{equation}
and
\begin{equation}
 \norm{q_\tau}_{L^4((\tau,T_*);L^2)}
 \le C_\Omega\nu^{-1/4}F_\tau A(\tau)^{1/2}.
 \label{eq:main-L4}
\end{equation}
In particular, the quantities in \eqref{eq:main-Hminus} and
\eqref{eq:main-L4} tend to zero as \(\tau\uparrow T_*\).

\item The vector field
\begin{equation}
 \mathfrak h_{\tau,i}=w\cdot\nabla(g_\tau)_i
 \label{eq:Hardy-probe-main}
\end{equation}
belongs to the periodic real Hardy space for almost every time, has zero
mean componentwise, and obeys
\begin{equation}
 \norm{\mathfrak h_\tau(t)}_{\Hh^1}
 \le C_\Omega\norm{w(t)}_2\norm{\nabla g_\tau(t)}_2,
 \qquad
 \norm{\mathfrak h_\tau}_{L^2((\tau,T_*);\Hh^1)}
 \le C_\Omega\nu^{-1/2}F_\tau^2.
 \label{eq:Hardy-bounds-main}
\end{equation}
Moreover, the last line of \eqref{eq:main-ledger} is
\begin{equation}
 \int_\tau^t
 \ip{u}{\mathfrak h_\tau}_{\BMO,\Hh^1}\dd\rho.
 \label{eq:Hardy-pairing-main}
\end{equation}
\end{enumerate}
\end{maintheorem}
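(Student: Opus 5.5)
The proof of Theorem~\ref{thm:main} proceeds item by item, starting with the ledger (ii), because (i) and (iii) rest on it.

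\textbf{Step (ii).} Subtract \eqref{eq:passive} with \(h=f_\tau\) from \eqref{eq:Oseen}. This gives
\[
\partial_tr_\tau+(u\cdot\nabla)r_\tau-\nu\Delta r_\tau=-\nabla\pi_w,\qquad r_\tau(\tau)=0.
\]
Test this against \(r_\tau\). The transport term vanishes because \(\nabla\cdot u=0\). By \eqref{eq:Hodge-relations}, \(\langle\nabla\pi_w,r_\tau\rangle=\langle\nabla\pi_w,\Qq r_\tau\rangle=-\langle\nabla\pi_w,g_\tau\rangle\), which yields the first line of \eqref{eq:main-ledger}. The second line follows by integrating by parts, using \(\nabla\cdot g_\tau=\nabla\cdot z_\tau\). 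For the third line, use \(\nabla\pi_w=-\Qq[(u\cdot\nabla)w]\) and self-adjointness of \(\Qq\), then integrate by parts in \(x_j\). Justification is routine, since all fields are smooth on \([\tau,t]\).

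\textbf{Step (i).} I expect this to be the main obstacle: showing that \(z_\tau\) is terminally non-atomic uniformly in the drift.

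\emph{Nash smoothing.} Use the Nash-type (Aronson) bound \(\|S(t,\tau)\|_{L^1\to L^\infty}\le C(\nu(t-\tau))^{-3/2}\) for divergence-free drifts. Also use \(L^2\) contraction, which holds because the drift is divergence-free. Approximate \(f_\tau\) in \(L^2\) by an \(L^\infty\) field \(f^\epsilon\); the \(L^\infty\) bound propagates by the maximum principle. Then
\[
\int_{B_\rho}|z_\tau|^2\le 2\|f_\tau-f^\epsilon\|_2^2+2|B_\rho|\,\|f^\epsilon\|_\infty^2.
\]
Taking \(\rho\downarrow0\) and then \(\epsilon\to0\) gives \(\Aa_a[z_\tau]=0\).

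\emph{Hodge components and inheritance.} For \(g_\tau=\Qq z_\tau\) and \(v_\tau\), \(\Qq\) is nonlocal, so I use strong compactness instead. As \(t\uparrow T_*\), \(z_\tau(t)\) converges strongly in \(L^2\); this holds because \(\int^{T_*}\|u\|_\infty<\infty\) gives an \(H^1\) bound on \(z_\tau\) for \(t\) bounded away from \(\tau\). Strong convergence is preserved by the bounded projections \(\Pp,\Qq\), and a strongly convergent family in \(L^2\) is non-atomic. Inheritance then follows from the triangle inequality \(\|r\|_{L^2(B_\rho)}\ge\|w\|_{L^2(B_\rho)}-\|z\|_{L^2(B_\rho)}\), and the same argument applies to \(c_\tau\).

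\textbf{Step (iii).} Atom inheritance gives \(\limsup\|r_\tau(t)\|_2^2\ge m\). Combined with (ii), this yields \eqref{eq:main-work-lower}. For \eqref{eq:main-product-lower}, bound the third line of \eqref{eq:main-ledger} by \(\|u\|_\infty\|w\|_2\|\nabla g_\tau\|_2\). Then use \(\|w(t)\|_2\le F_\tau\le\|f\|_2\), from energy decay of the Oseen evolution.

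\textbf{Step (iv).} Each \(q_\tau\) solves a forced heat equation with zero data. For \(r_\tau\), the equation reads
\[
\partial_tr-\nu\Delta r=-\nabla\cdot(u\otimes w)+\nabla\cdot(u\otimes z)-\nabla\pi_w.
\]
Testing against \((-\Delta)^{-1}\) of the mean-free part gives an estimate of the form
\[
\tfrac{d}{dt}\|r\|_{\dot H^{-1}}\lesssim\|u\|_\infty(\|w\|_2+\|z\|_2)\le 2\|u\|_\infty F_\tau.
\]
Integrating yields \eqref{eq:main-Hminus}; the other fields follow by applying \(\Pp,\Qq\). The \(L^4L^2\) bound comes from interpolating between \(\dot H^{-1}\) and \(\dot H^1\), using \(\int\|q\|_2^4\le\sup\|q\|_{\dot H^{-1}}^2\int\|\nabla q\|_2^2\). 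The second factor is at most \(C\nu^{-1}F_\tau^2\) by energy inequalities.

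\textbf{Step (v).} Apply Coifman--Lions--Meyer--Semmes: \(w\) is divergence-free and \(\nabla(g_\tau)_i\) is a gradient, so each component of \(\mathfrak h_\tau\) lies in \(\Hh^1\) with the stated bound. Time-integrating with \(\nu\int\|\nabla g_\tau\|_2^2\le CF_\tau^2\) gives \eqref{eq:Hardy-bounds-main}. Finally, the pairing \eqref{eq:Hardy-pairing-main} follows from \(L^\infty\subset\BMO\) and Fefferman duality; zero mean lets constants drop out.
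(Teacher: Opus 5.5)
The gap is in your Step (i), in the treatment of \(v_\tau=\Pp z_\tau\) and \(g_\tau=\Qq z_\tau\). You assert that \eqref{eq:A-finite} gives an \(H^1\) bound on \(z_\tau(t)\) for \(t\ge\tau+\delta\), and hence strong \(L^2\) convergence as \(t\uparrow T_*\). This does not follow. The \(\dot H^1\) energy estimate for the passive equation reads
\[
\frac{\dd}{\dd t}\norm{\nabla z}_2^2+\nu\norm{\Delta z}_2^2\le\nu^{-1}\norm{u}_\infty^2\norm{\nabla z}_2^2 .
\]
Gr\"onwall therefore needs \(u\in L^2_tL^\infty_x\), which is the scale-critical drift class, or else \(\int\norm{\nabla u}_\infty\dd t<\infty\). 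The hypothesis \(u\in L^1_tL^\infty_x\) is supercritical and only bounds total particle displacement. Strong convergence can in fact fail under \eqref{eq:A-finite}. A self-similar cascade of cellular flows at scales \(2^{-n}\), with displacements \(\sim2^{-n}\) and each stage run fast enough that diffusion is negligible, mixes a passive field weakly to its mean while dissipating only a small part of its \(L^2\) norm. So terminal non-atomicity of \(v_\tau,g_\tau\) is left unproved. With it goes \(\Aa_a[c_\tau]\ge m\), which you derive from non-atomicity of \(v_\tau\). The bound \(\Aa_a[r_\tau]\ge m\) and item (iii) are unaffected, since they use only \(z_\tau\), and your truncation-plus-maximum-principle argument for \(z_\tau\) is correct.

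No compactness is needed, because the nonlocality of \(\Qq\) is harmless in \(L^p\) for \(2<p<\infty\), where \(\Pp\) and \(\Qq\) are bounded. With your splitting, the maximum principle bounds \(\Qq S(t,\tau)f^\epsilon\) in \(L^p\) by \(C_{p,\Omega}\norm{f^\epsilon}_\infty\), uniformly in \(t\). The remainder \(\Qq S(t,\tau)(f_\tau-f^\epsilon)\) has \(L^2\) norm at most \(\norm{f_\tau-f^\epsilon}_2\). H\"older then gives
\[
\int_{B_\rho(a)}\abs{g_\tau(t)}^2\dd x\le 2C_{p,\Omega}^2\norm{f^\epsilon}_\infty^2\abs{B_\rho}^{1-2/p}+2\norm{f_\tau-f^\epsilon}_2^2 ,
\]
and the same bound holds for \(v_\tau\). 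Let \(\rho\downarrow0\) and then \(\epsilon\to0\). Since \(\tau>s\), the datum \(f_\tau=w(\tau)\) is already smooth, so you could even take \(f^\epsilon=f_\tau\). This is the mechanism of Proposition~\ref{prop:diffuse}, which uses the Nash \(L^2\to L^\infty\) bound on \((\tau+\delta,T_*)\) in place of truncation. Your remaining steps agree with the paper. In (iv), estimating \(r_\tau\) in \(\dot H^{-1}\) and then projecting via \(c_\tau=\Pp r_\tau\) and \(g_\tau=-\Qq r_\tau\) is a valid shortcut for the paper's separate Duhamel bounds.
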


Theorem~\ref{thm:main} separates two quantifier regimes. As
\(\tau\uparrow T_*\), all three responses vanish in the stated weak tail
norms. For every fixed \(\tau\), however, \(r_\tau\) and \(c_\tau\) retain
atomic mass at least \(m\), and the terminal work limsup is at least \(m/2\).
This is the relative pressure-work mechanism underlying the result.

For the local theorem, fix a radial cutoff
\(\chi\in C_c^\infty(B_2(0))\) satisfying
\begin{equation}
 0\le\chi\le1,\qquad \chi=1\text{ on }B_1(0),
 \label{eq:cutoff-base}
\end{equation}
and, in geodesic coordinates centered at \(a\), put
\(\chi_R(x)=\chi((x-a)/R)\) for \(2R<\operatorname{inj}(\Omega)\). Define
\begin{equation}
 \delta_\tau=T_*-\tau.
 \label{eq:delta-def}
\end{equation}

\begin{mainlocal}[Mesoscopic form of the relative pressure-work principle]
\label{thm:localization}
Under the assumptions of Theorem~\ref{thm:main}, let
\(R_n\downarrow0\) be arbitrary. There exist
\(\tau_n\uparrow T_*\), with \(\tau_n>s\), and
\(t_n\in(\tau_n,T_*)\), \(t_n\uparrow T_*\), such that
\begin{equation}
 \frac{A(\tau_n)}{R_n}
 +\frac{\sqrt{\nu\delta_{\tau_n}}}{R_n}
 \longrightarrow0,
 \label{eq:mesoscopic-schedule}
\end{equation}
and
\begin{equation}
 \liminf_{n\to\infty}
 \int_{\tau_n}^{t_n}\int_\Omega
 \chi_{R_n}\nabla\pi_w\cdot g_{\tau_n}\dd x\dd t
 \ge\frac m2.
 \label{eq:localized-work-main}
\end{equation}
Consequently,
\begin{equation}
 \liminf_{n\to\infty}
 \int_{\tau_n}^{t_n}\int_{B_{2R_n}(a)}
 [\nabla\pi_w\cdot g_{\tau_n}]_+\dd x\dd t
 \ge\frac m2,
 \label{eq:positive-local-work-main}
\end{equation}
where \([h]_+=\max\{h,0\}\).
Moreover,
\begin{equation}
 \liminf_{n\to\infty}
 \operatorname{Osc}_2\!\left(
 \pi_w;(\tau_n,T_*)\times B_{2R_n}(a)\right)
 \ge c_{\Omega,\chi}\frac{m\sqrt\nu}{\norm{f}_2},
 \label{eq:pressure-osc-main}
\end{equation}
and
\begin{equation}
 \liminf_{n\to\infty}
 R_n^2\int_{\tau_n}^{T_*}\int_{B_{2R_n}(a)}
 \abs{\nabla\pi_w}^2\dd x\dd t
 \ge c_{\Omega,\chi}\frac{m^2\nu}{\norm{f}_2^2}.
 \label{eq:pressure-gradient-main}
\end{equation}
\end{mainlocal}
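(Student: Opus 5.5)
We prove Theorem~\ref{thm:localization} by localizing the global ledger of Theorem~\ref{thm:main}(ii) with the cutoff $\chi_{R}$ and controlling the resulting cutoff fluxes with the weak-tail bounds of Theorem~\ref{thm:main}(iv). The radii, restarts and terminal times are chosen in the order $R_n\to\tau_n\to t_n$.

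\textbf{Schedule.} Since $A(\tau)\to0$ and $\delta_\tau\to0$ as $\tau\uparrow T_*$, for each $n$ we pick $\tau_n>\max(s,\tau_{n-1})$ with $A(\tau_n)+\sqrt{\nu\delta_{\tau_n}}\le R_n/n$. This gives \eqref{eq:mesoscopic-schedule}.

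\textbf{Localized ledger.} Fix $\tau=\tau_n$. We subtract the passive equation from \eqref{eq:Oseen}, which gives $\partial_t r_\tau+(u\cdot\nabla)r_\tau-\nu\Delta r_\tau=-\nabla\pi_w$. Testing against $\chi_R r_\tau$ and using $\nabla\cdot u=0$ yields
\[
 \tfrac12\!\int\!\chi_R|r_\tau(t)|^2+\nu\!\int_\tau^t\!\!\int\!\chi_R|\nabla r_\tau|^2
 =-\!\int_\tau^t\!\!\int\!\chi_R\nabla\pi_w\cdot r_\tau
 +\!\int_\tau^t\!\!\int\!\Bigl(\tfrac12|r_\tau|^2u\cdot\nabla\chi_R+\tfrac\nu2|r_\tau|^2\Delta\chi_R\Bigr).
\]
We write $r_\tau=c_\tau-g_\tau$. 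Then $-\chi_R\nabla\pi_w\cdot r_\tau=\chi_R\nabla\pi_w\cdot g_\tau-\chi_R\nabla\pi_w\cdot c_\tau$. Integrating by parts and using $\nabla\cdot c_\tau=0$, the $c_\tau$ term becomes the pressure flux $\int\pi_w\,c_\tau\cdot\nabla\chi_R$. The remaining error terms are
\[
 E_1=R^{-1}\!\int_\tau^{T_*}\!\|u\|_\infty\|r_\tau\|_2^2,\qquad E_2=\nu R^{-2}\!\int_\tau^{T_*}\!\|r_\tau\|_2^2,\qquad E_3=\Bigl|\int_\tau^{t}\!\!\int\pi_w\,c_\tau\cdot\nabla\chi_R\Bigr|.
\]
We bound $E_1\lesssim R^{-1}A(\tau)F_\tau^2$ using $\|r_\tau\|_2\le 2F_\tau$, which follows from the energy bounds for $w$ and $z_\tau$. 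We bound $E_2\lesssim \nu R^{-2}\delta_\tau F_\tau^2$. Both tend to $0$ by the schedule.

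\textbf{Main obstacle.} The main obstacle is the pressure flux $E_3$. Using $\pi_w=R_iR_j(u_jw_i)$ gives $\|\pi_w\|_2\le\|u\|_\infty\|w\|_2$, so
\[
 E_3\lesssim R^{-1}\!\int_\tau^{T_*}\!\|u\|_\infty\|w\|_2\|c_\tau\|_2\lesssim R^{-1}A(\tau)F_\tau\|f\|_2,
\]
which again tends to $0$. If this crude route fails, because $\|w(\rho)\|_2\le\|f\|_2$ may not suffice uniformly, the fallback is to interpolate with the $\dot H^{-1}$ and $L^4L^2$ tail bounds \eqref{eq:main-Hminus}--\eqref{eq:main-L4} on $c_\tau$.

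\textbf{Choice of $t_n$.} By Theorem~\ref{thm:main}(i), $\Aa_a[r_{\tau_n}]\ge m$. Since $\chi_{R_n}=1$ on $B_{R_n}(a)\supset B_\rho(a)$ for small $\rho$, we get $\limsup_{t\uparrow T_*}\int\chi_{R_n}|r_{\tau_n}(t)|^2\ge m$. So we choose $t_n\in(\max(\tau_n,t_{n-1}),T_*)$ with $t_n>T_*-1/n$ and $\tfrac12\int\chi_{R_n}|r_{\tau_n}(t_n)|^2\ge m/2-1/n$. The errors are bounded uniformly in $t$, so this gives \eqref{eq:localized-work-main}. Since $\chi_{R_n}\le \mathbf 1_{B_{2R_n}(a)}$, the bound \eqref{eq:positive-local-work-main} follows immediately.

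\textbf{Oscillation bound.} For each time and each $\beta\in\R$, we have $\int\chi_R\nabla\pi_w\cdot g_\tau=-\int(\pi_w-\beta)\nabla\cdot(\chi_Rg_\tau)$, because $\int\nabla\cdot(\chi_Rg_\tau)=0$. Cauchy--Schwarz in space-time then gives
\[
 \frac m2\lesssim \operatorname{Osc}_2(\pi_w)\,\|\nabla\cdot(\chi_{R}g_\tau)\|_{L^2L^2}.
\]
We estimate $\|\nabla\cdot(\chi_R g)\|\le\|\nabla g\|_2+R^{-1}\|g\|_{L^2(B_{2R})}$. For the first term, the passive energy identity gives $\int_\tau^{T_*}\|\nabla g_\tau\|_2^2\le F_\tau^2/(2\nu)\le \|f\|_2^2/(2\nu)$, since the energy of $w$ is nonincreasing. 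For the second, $R^{-2}\int\|g_\tau\|_2^2\le R^{-2}\delta_\tau F_\tau^2=o(\nu^{-1}F_\tau^2)$ by the schedule. Together these give \eqref{eq:pressure-osc-main}.

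\textbf{Gradient bound.} The ball Poincar\'e inequality, $\inf_\beta\|\pi_w-\beta\|_{L^2(B_{2R})}\le C R\|\nabla\pi_w\|_{L^2(B_{2R})}$, squared and integrated in time, converts \eqref{eq:pressure-osc-main} into \eqref{eq:pressure-gradient-main}.
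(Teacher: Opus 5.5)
Your proposal is correct and follows the paper's proof essentially step by step. You use the same local relative-energy identity, with the pressure-cutoff flux $\int\pi_w\,c_\tau\cdot\nabla\chi_R$ arising from $r_\tau=c_\tau-g_\tau$, the same flux bound $F_\tau^2[A(\tau)/R+\nu\delta_\tau/R^2]$, the ordered selection $R_n\to\tau_n\to t_n$, and the same duality-plus-Poincar\'e conversion to the oscillation and gradient bounds. The fallback in your ``main obstacle'' paragraph is unnecessary: the crude estimate $\|\pi_w\|_2\le C\|u\|_\infty\|w\|_2$ together with $\|w\|_2,\|c_\tau\|_2\lesssim F_\tau\le\|f\|_2$ already closes the argument, exactly as in the paper. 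You should also discard finitely many $n$ so that $2R_n<\operatorname{inj}(\Omega)$ and the cutoff $\chi_{R_n}$ is defined.
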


\begin{remark}[Order of limits]
\label{rem:order-limits}
The selection in Theorem~\ref{thm:localization} is ordered. First choose
\(R_n\), then choose a sufficiently late fixed restart \(\tau_n\) to make
the cutoff fluxes small, and only then use the terminal limsup of
\(r_{\tau_n}\) to choose \(t_n\). The theorem does not exchange these three
operations, nor does it localize at arbitrarily small radii for one fixed
restart.
\end{remark}

\begin{remark}[Scope]
\label{rem:scope-main}
Theorem~\ref{thm:NS-pressure} identifies a necessary consequence of an
endpoint atom; it does not assert that an atom exists. Its contrapositive is
an atom-exclusion criterion under local square integrability of
\(p-\beta(t)\) for some measurable function \(\beta\), or of \(\nabla p\),
not a proof that the unrestricted energy class has that regularity or a full
Navier--Stokes regularity criterion.
Theorem~\ref{thm:main} and its mesoscopic form are likewise conditional on
\(\Aa_a[w]>0\).
\end{remark}

%% file: sections/passive.tex
\section{Passive smoothing and terminal diffuseness}
\label{sec:passive}

We first isolate the only property of the passive propagator needed for
atom inheritance. Its smoothing constants do not depend on the size of the
divergence-free drift.

\begin{lemma}[Passive Nash smoothing]
\label{lem:Nash}
Let \(z=S(t,\tau)h\) solve \eqref{eq:passive}. For
\(1\le p\le\infty\),
\begin{equation}
 \norm{z(t)}_p\le\norm{h}_p
 \label{eq:Lp-contraction}
\end{equation}
whenever the right-hand side is finite. In addition,
\begin{equation}
 \norm{S(t,\tau)h}_\infty
 \le C_\Omega\bigl(1+[\nu(t-\tau)]^{-3/4}\bigr)\norm{h}_2,
 \qquad t>\tau.
 \label{eq:L2-Linfty}
\end{equation}
\end{lemma}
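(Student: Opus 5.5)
We prove the two estimates separately; both rest only on the fact that the drift \(u\) is smooth and divergence-free on \([\tau,t]\subset[t_b,T_*)\), so that \(z\) is a classical solution on the torus and all integrations by parts are legitimate. By density we may take \(h\) smooth, since both bounds are linear in \(h\) and pass to the limit. We work componentwise: each component \(z_i\) solves the scalar equation \(\partial_t z_i+u\cdot\nabla z_i=\nu\Delta z_i\).

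\emph{The contraction \eqref{eq:Lp-contraction}.} For \(1\le p<\infty\), multiply the scalar equation by \(\abs{z_i}^{p-2}z_i\) (regularized as \((z_i^2+\epsilon)^{(p-2)/2}z_i\) when \(p<2\)) and integrate over \(\Omega\). The transport term becomes \(\int u\cdot\nabla(\abs{z_i}^p/p)\dd x=-\int(\nabla\cdot u)\abs{z_i}^p/p\dd x=0\). The diffusion term is \(-\nu(p-1)\int\abs{z_i}^{p-2}\abs{\nabla z_i}^2\dd x\le0\). Hence \(\norm{z_i(t)}_p\) is nonincreasing. For \(p=\infty\) we instead use the maximum principle, or let \(p\to\infty\).

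The statement is written for the vector field with Euclidean norm \(\abs{z}\), and there the componentwise bound is not enough. We therefore compute directly with \(\abs{z}^2\):
\[
\partial_t\abs z^2+u\cdot\nabla\abs z^2=\nu\Delta\abs z^2-2\nu\abs{\nabla z}^2 .
\]
So \(\phi=\abs z^2\) is a subsolution of the same drift–diffusion equation. The same test-function argument applied to \(\phi^{p/2}\), or equivalently Kato's inequality for \(\abs z\), gives \(\norm{\abs{z(t)}}_p\le\norm{\abs h}_p\) for every \(p\). For \(p=\infty\) this is the maximum principle for \(\abs z^2\).

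\emph{The smoothing bound \eqref{eq:L2-Linfty}.} This is a Nash argument. The key point is that the drift drops out of every energy identity because \(\nabla\cdot u=0\), so the constants cannot depend on \(u\). Since \(\abs z\) is a subsolution, it suffices to bound each scalar solution \(\theta\) with data \(\abs{h_i}\), or to run the argument directly for \(\abs z\).

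\begin{enumerate}[label=(\alph*)]
\item \emph{\(L^1\to L^2\) decay.} Split \(\theta=\bar\theta+\theta_0\). The mean \(\bar\theta\) is conserved, because \(\int u\cdot\nabla\theta=0\). The energy identity for the fluctuation is \(\frac{d}{dt}\norm{\theta_0}_2^2=-2\nu\norm{\nabla\theta}_2^2\). The Nash inequality on \(\T^3\),
\[
\norm{\theta_0}_2^{1+2/3}\le C_\Omega\norm{\nabla\theta}_2\norm{\theta_0}_1^{2/3},
\]
together with \(\norm{\theta_0}_1\le2\norm{\theta}_1\le2\norm{h}_1\), gives the differential inequality \(y'\le-c\nu y^{5/3}/\norm h_1^{4/3}\) for \(y=\norm{\theta_0}_2^2\). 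Integrating yields \(\norm{\theta_0(t)}_2\le C(\nu(t-\tau))^{-3/4}\norm h_1\). Adding the mean, which is bounded by \(C_\Omega\norm h_1\), gives
\[
\norm{\theta(t)}_2\le C_\Omega\bigl(1+[\nu(t-\tau)]^{-3/4}\bigr)\norm h_1 .
\]
\item \emph{Duality.} The formal adjoint of the propagator \(S(t,\tau)\) solves the backward equation \(-\partial_\sigma\psi-u\cdot\nabla\psi=\nu\Delta\psi\), again with divergence-free drift, here \(-u\). Step (a) therefore applies to the adjoint with the same constants, giving an \(L^1\to L^2\) bound for it. By duality, \(S(t,\tau)\) maps \(L^2\to L^\infty\) with the same bound.
\end{enumerate}

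Applying this to each component gives \eqref{eq:L2-Linfty}. The vector \(L^\infty\) norm is bounded by \(\sqrt3\) times the largest component norm, and that factor is absorbed into \(C_\Omega\).

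The step needing the most care is the duality step. We must confirm that the adjoint propagator is well defined, which follows from backward solvability with smooth drift. The constant \(1\) in \((1+\cdot)\) is needed because on the torus the mean does not decay. One could instead first prove \(L^1\to L^2\) and then \(L^2\to L^\infty\) over two half-intervals of length \((t-\tau)/2\) each, but the direct duality argument already yields the stated \(L^2\to L^\infty\) bound with exponent \(3/4\).
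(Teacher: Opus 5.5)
Your proposal is correct and follows essentially the same route as the paper. You prove the \(L^p\) contraction by testing with \(\abs{z}^{p-2}z\), where \(\nabla\cdot u=0\) removes the transport term. You obtain an \(L^1\to L^2\) bound from the Nash inequality, the \(L^1\) contraction and the energy identity, and then use duality with the backward adjoint, whose drift is again divergence-free, to reach \(L^2\to L^\infty\).

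Two details differ slightly from the paper. You split off the conserved mean so that the homogeneous Nash inequality applies to the fluctuation, which is a cleaner version of the paper's inhomogeneous Nash step. Your Kato-type handling of the Euclidean norm for \(1\le p<2\) fills in a point the paper leaves implicit.
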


\begin{proof}
For finite \(p\), multiply the vector equation \eqref{eq:passive} by
\(\abs{z}^{p-2}z\), regularizing at zero when \(p<2\). The transport term
integrates to zero because \(\nabla\cdot u=0\), and diffusion is dissipative.
This proves \eqref{eq:Lp-contraction}; the endpoint cases follow by
approximation and the maximum principle.

For completeness, the periodic Nash inequality gives
\[
 \norm{\varphi}_2^{10/3}
 \le C_\Omega\norm{\varphi}_1^{4/3}
 \bigl(\norm{\nabla\varphi}_2^2+\norm{\varphi}_2^2\bigr).
\]
Combining it with the \(L^1\) contraction and the scalar energy identity
yields the standard \(L^1\to L^2\) bound
\[
 \norm{S(t,\tau)}_{L^1\to L^2}
 \le C_\Omega\bigl(1+[\nu(t-\tau)]^{-3/4}\bigr).
\]
The backward adjoint evolution has the same divergence-free drift structure
and the same bound. Duality gives \eqref{eq:L2-Linfty}. This is the
classical Nash argument \cite{Nash1958}; smooth approximation removes any
auxiliary regularity used in the calculation.
\end{proof}

\begin{proposition}[Terminal diffuseness]
\label{prop:diffuse}
For each fixed \(\tau\in(s,T_*)\), the fields \(z_\tau\), \(v_\tau=\Pp z_\tau\),
and \(g_\tau=\Qq z_\tau\) are terminally non-atomic. More precisely, for
each fixed \(\delta\in(0,T_*-\tau)\),
\begin{equation}
 \sup_{\tau+\delta<t<T_*}\norm{z_\tau(t)}_\infty<\infty,
 \label{eq:z-uniform}
\end{equation}
and for every \(2<p<\infty\),
\begin{equation}
 \sup_{\tau+\delta<t<T_*}
 \bigl(\norm{v_\tau(t)}_p+\norm{g_\tau(t)}_p\bigr)<\infty.
 \label{eq:PQ-uniform}
\end{equation}
\end{proposition}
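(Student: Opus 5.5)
The plan is to combine Lemma~\ref{lem:Nash} with the semigroup property of the passive propagator, and then use the boundedness of the Hodge projections on \(L^p\), \(1<p<\infty\).

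First, fix \(\delta\in(0,T_*-\tau)\) and write, for \(t\in(\tau+\delta,T_*)\),
\[
 z_\tau(t)=S(t,\tau+\delta/2)\,z_\tau(\tau+\delta/2).
\]
By the \(L^2\) contraction \eqref{eq:Lp-contraction}, \(\norm{z_\tau(\tau+\delta/2)}_2\le\norm{f_\tau}_2=F_\tau\), which is finite because \(w\) is an energy solution. Applying \eqref{eq:L2-Linfty} on the interval \([\tau+\delta/2,t]\), whose length is at least \(\delta/2\), gives
\[
 \norm{z_\tau(t)}_\infty\le C_\Omega\bigl(1+(\nu\delta/2)^{-3/4}\bigr)F_\tau .
\]
This bound is uniform in \(t\in(\tau+\delta,T_*)\), which is \eqref{eq:z-uniform}. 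The constant does not depend on the drift, so it is irrelevant that \(\norm{u(t)}_\infty\) may blow up as \(t\uparrow T_*\). The only technical point is that Lemma~\ref{lem:Nash} is applied on intervals ending arbitrarily close to \(T_*\); since the drift is smooth on \([\tau,t]\) for each \(t<T_*\), the lemma applies on each such interval with the same constant.

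Second, on the torus \(\norm{\cdot}_p\le C_\Omega\norm{\cdot}_\infty\), so \(z_\tau\) is uniformly bounded in every \(L^p\) on \((\tau+\delta,T_*)\). The projector \(\Qq\) is a matrix of second-order Riesz transforms \(R_iR_j\) acting on nonzero Fourier modes, and it annihilates constants. Hence \(\Qq\) is bounded on \(L^p(\T^3)\) for \(1<p<\infty\) by periodic Calder\'on--Zygmund theory (or transference), and so is \(\Pp=I-\Qq\). This yields \eqref{eq:PQ-uniform} for \(2<p<\infty\). The endpoint \(p=\infty\) has to be excluded here, because Riesz transforms are not bounded on \(L^\infty\).

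Third, terminal non-atomicity follows from Hölder's inequality. Take \(q\in\{z_\tau,v_\tau,g_\tau\}\), any \(p\in(2,\infty)\), say \(p=4\), and any \(t>\tau+\delta\). Then
\[
 \int_{B_\rho(a)}\abs{q(t)}^2\dd x\le\abs{B_\rho}^{1-2/p}\norm{q(t)}_p^2\le C\rho^{3(1-2/p)}\sup_{\tau+\delta<t'<T_*}\norm{q(t')}_p^2 .
\]
Taking \(\limsup_{t\uparrow T_*}\) and then \(\rho\downarrow0\) gives \(\Aa_a[q]=0\) for every \(a\).

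I expect no real obstacle. The substantive input, the drift-independent Nash bound, is already established in Lemma~\ref{lem:Nash}; the remaining steps are the semigroup splitting and the \(L^p\) boundedness of the projections.
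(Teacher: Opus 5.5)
Your proposal is correct and follows the paper's proof: the drift-independent Nash bound \eqref{eq:L2-Linfty} for \(z_\tau\), then \(L^p\)-boundedness of \(\Pp,\Qq\) for \(1<p<\infty\), then H\"older on \(B_\rho(a)\) with the terminal limsup taken before \(\rho\downarrow0\). The semigroup splitting at \(\tau+\delta/2\) is harmless but unnecessary: since \(t-\tau>\delta\), you can apply \eqref{eq:L2-Linfty} directly from \(\tau\) with datum \(f_\tau\).
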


\begin{proof}
Equation \eqref{eq:z-uniform} follows from Lemma~\ref{lem:Nash}. The
periodic Riesz transforms, hence \(\Pp\) and \(\Qq\), are bounded on \(L^p\)
for \(1<p<\infty\), which proves \eqref{eq:PQ-uniform}. Thus, for every
measurable set \(E\subset\Omega\),
\[
 \int_E\abs{z_\tau(t)}^2\dd x
 \le \norm{z_\tau(t)}_\infty^2\abs{E},
\]
whereas H\"older's inequality gives
\[
 \int_E\abs{v_\tau(t)}^2\dd x
 +\int_E\abs{g_\tau(t)}^2\dd x
 \le C_{p,\tau}\abs{E}^{1-2/p}
\]
on every late fixed subinterval. Taking \(E=B_\rho(a)\), then the terminal
limsup and finally \(\rho\downarrow0\), proves the claim.
\end{proof}

\begin{remark}
The conclusion is weaker than uniform small-set equiintegrability jointly in
\(\tau\) and \(t\), and that distinction is essential. The smoothing
constant in \eqref{eq:L2-Linfty} degenerates when \(t-\tau\downarrow0\).
\end{remark}

%% file: sections/ledger.tex
\section{Restart equations and the relative pressure-work identity}
\label{sec:ledger}

All identities below can first be proved on compact preterminal intervals,
where the fields are smooth, and then passed to the energy class by
approximation.

\begin{lemma}[Energy and Hodge relations]
\label{lem:energy-Hodge}
For \(\tau<t<T_*\),
\begin{equation}
 \begin{aligned}
 \norm{w(t)}_2^2+2\nu\int_\tau^t\norm{\nabla w}_2^2\dd\rho
 &=F_\tau^2,\\
 \norm{z_\tau(t)}_2^2+2\nu\int_\tau^t\norm{\nabla z_\tau}_2^2\dd\rho
 &=F_\tau^2.
 \end{aligned}
 \label{eq:two-energies}
\end{equation}
Moreover, \(\Pp\) and \(\Qq\) are orthogonal in \(L^2\) and in homogeneous
\(H^1\):
\begin{equation}
 \norm{q}_2^2=\norm{\Pp q}_2^2+\norm{\Qq q}_2^2,
 \qquad
 \norm{\nabla q}_2^2=\norm{\nabla\Pp q}_2^2+\norm{\nabla\Qq q}_2^2.
 \label{eq:Hodge-orthogonality}
\end{equation}
\end{lemma}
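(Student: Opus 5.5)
The plan is to prove both identities in \eqref{eq:two-energies} first on a compact preterminal interval \([\tau,t]\subset[s,T_*)\), where \(u\), \(w\), \(\pi_w\) and \(z_\tau\) are smooth. We then pass to the energy class by the approximation remark at the start of this section. The two energy identities rest on the same two facts: the transport term is skew, and the pressure term is orthogonal to divergence-free fields. The Hodge relations \eqref{eq:Hodge-orthogonality} are pure Fourier statements.

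For the Oseen field, I would take the \(L^2\) inner product of \eqref{eq:Oseen} with \(w\) and integrate by parts on \(\T^3\). The drift term gives
\[
\int_\Omega (u\cdot\nabla)w\cdot w\dd x=\tfrac12\int_\Omega u\cdot\nabla\abs{w}^2\dd x=-\tfrac12\int_\Omega(\nabla\cdot u)\abs{w}^2\dd x=0 .
\]
The pressure term \(\ip{\nabla\pi_w}{w}=-\ip{\pi_w}{\nabla\cdot w}\) vanishes because \(w\) is solenoidal. The viscous term gives \(-\nu\norm{\nabla w}_2^2\). Integrating \(\frac{\dd}{\dd t}\frac12\norm{w}_2^2=-\nu\norm{\nabla w}_2^2\) from \(\tau\) to \(t\), and using \(w(\tau)=f_\tau\), yields the first line.

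For the passive field \(z_\tau\), the argument is the same with \(\pi\equiv0\). The transport term vanishes componentwise by the same computation, since only \(\nabla\cdot u=0\) is used and not \(\nabla\cdot z_\tau=0\). Because \(z_\tau(\tau)=f_\tau\), the right-hand side is again \(F_\tau^2\). This is just the \(p=2\) case of the energy computation in Lemma~\ref{lem:Nash}, tracked as an identity rather than an inequality.

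For \eqref{eq:Hodge-orthogonality}, I expand \(q=\sum_k\widehat q(k)e^{ik\cdot x}\). For \(k\neq0\), \(\widehat{\Pp q}(k)\) is the orthogonal projection of \(\widehat q(k)\in\mathbb C^3\) onto \(k^\perp\), and \(\widehat{\Qq q}(k)\) is its projection onto \(\mathbb C k\); on \(k=0\), \(\Qq\) vanishes. The Pythagorean identity holds mode by mode, and Parseval then gives the \(L^2\) splitting. Multiplying each mode by \(\abs{k}^2\) gives the \(\dot H^1\) splitting, since \(\Pp\) and \(\Qq\) commute with \(\nabla\). Real-valuedness is preserved because the Fourier multipliers are even.

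The only delicate step is the passage to the energy class near \(T_*\). I expect it to be harmless, because the statement only concerns \(t<T_*\), where everything is smooth on \([\tau,t]\). So the identities hold classically, and no limiting argument at the terminal time is needed.
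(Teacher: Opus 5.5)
Your proposal is correct and follows the paper's proof. You test each equation with its own solution, use skewness of the transport term from \(\nabla\cdot u=0\) and orthogonality of \(\nabla\pi_w\) to the solenoidal \(w\), and obtain the Hodge splitting from \(\Pp,\Qq\) being complementary orthogonal Fourier multipliers that commute with derivatives. You also correctly note that for \(s<\tau<t<T_*\) all fields are already smooth on \([\tau,t]\), so the identities hold classically and no approximation or terminal-time limit is actually needed.
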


\begin{proof}
The transport terms in both equations are skew in \(L^2\), and the Oseen
pressure is orthogonal to the divergence-free field \(w\), which proves
\eqref{eq:two-energies}. Formula \eqref{eq:Hodge-orthogonality} follows
because \(\Pp,\Qq\) are complementary orthogonal Fourier multipliers and
commute with derivatives.
\end{proof}

\begin{lemma}[Restart system]
\label{lem:restart-system}
For every fixed \(\tau\in(s,T_*)\),
\begin{equation}
 r_\tau(\tau)=c_\tau(\tau)=g_\tau(\tau)=0
 \label{eq:zero-initial}
\end{equation}
and
\begin{equation}
 \partial_tr_\tau+(u\cdot\nabla)r_\tau+\nabla\pi_w
 =\nu\Delta r_\tau.
 \label{eq:r-equation}
\end{equation}
In addition,
\begin{equation}
 \partial_tc_\tau+\Pp[(u\cdot\nabla)c_\tau]
 =\nu\Delta c_\tau-\Pp[(\nabla u)^Tg_\tau],
 \label{eq:c-equation}
\end{equation}
and
\begin{equation}
 \partial_tg_\tau+\Qq[(u\cdot\nabla)g_\tau]
 =\nu\Delta g_\tau-\Qq[(u\cdot\nabla)v_\tau].
 \label{eq:g-equation}
\end{equation}
\end{lemma}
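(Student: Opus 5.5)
The plan is to obtain all four assertions by direct algebra on the two evolution equations \eqref{eq:Oseen} and \eqref{eq:passive}, first on a compact preterminal interval \([\tau,t_1]\subset[\tau,T_*)\) where everything is smooth. Since \(t_1<T_*\) is arbitrary, the identities then hold on \((\tau,T_*)\); no limiting argument at the terminal time is needed.

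\emph{Initial values.} We have \(z_\tau(\tau)=f_\tau=w(\tau)\), and \(f_\tau\) is divergence-free. Hence \(\Qq f_\tau=0\) and \(\Pp f_\tau=f_\tau\), because \(\Pp\) is the identity on the zero mode and on divergence-free fields. It follows that \(g_\tau(\tau)=0\), \(v_\tau(\tau)=f_\tau\), \(c_\tau(\tau)=w(\tau)-v_\tau(\tau)=0\), and \(r_\tau(\tau)=0\). This gives \eqref{eq:zero-initial}.

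\emph{The \(r_\tau\) equation.} Subtract the passive equation for \(z_\tau\) from the Oseen equation for \(w\). Both equations are linear with the same drift \(u\) and the same viscosity, so \(r_\tau=w-z_\tau\) satisfies \eqref{eq:r-equation}. The only term without a counterpart is the Oseen pressure gradient \(\nabla\pi_w\).

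\emph{The \(g_\tau\) equation.} Apply \(\Qq\) to \eqref{eq:passive}. Since \(\Qq\) commutes with \(\partial_t\) and \(\Delta\), this gives
\[
\partial_tg_\tau+\Qq[(u\cdot\nabla)z_\tau]=\nu\Delta g_\tau .
\]
Then split \(z_\tau=v_\tau+g_\tau\) inside the transport term. This yields \eqref{eq:g-equation}.

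\emph{The \(c_\tau\) equation.} Apply \(\Pp\) to the Oseen equation. By \eqref{eq:pressure}, \(\nabla\pi_w\) lies in the range of \(\Qq\), so it drops out, and \(\Pp w=w\). This gives
\[
\partial_tw+\Pp[(u\cdot\nabla)w]=\nu\Delta w .
\]
Applying \(\Pp\) to \eqref{eq:passive} gives
\[
\partial_tv_\tau+\Pp[(u\cdot\nabla)v_\tau]+\Pp[(u\cdot\nabla)g_\tau]=\nu\Delta v_\tau .
\]
Subtracting the second identity from the first, we get
\[
\partial_tc_\tau+\Pp[(u\cdot\nabla)c_\tau]=\nu\Delta c_\tau+\Pp[(u\cdot\nabla)g_\tau].
\]

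\emph{Main step: rewriting the source term.} What remains is to show that \(\Pp[(u\cdot\nabla)g_\tau]=-\Pp[(\nabla u)^Tg_\tau]\).
\begin{itemize}
\item Since \(g_\tau=\Qq z_\tau\) has no zero mode and is curl-free, on the torus it is an exact gradient, \(g_\tau=\nabla\phi\). Concretely, \(\phi=-(-\Delta)^{-1}\nabla\cdot z_\tau\) on the nonzero modes.
\item Then, using \(\partial_j\partial_i\phi=\partial_i\partial_j\phi\),
\[
\partial_i(u_j\partial_j\phi)=(\partial_iu_j)\partial_j\phi+u_j\partial_j\partial_i\phi ,
\]
that is,
\[
(u\cdot\nabla)g_\tau=\nabla(u\cdot g_\tau)-(\nabla u)^Tg_\tau .
\]
\item The Leray projector annihilates the gradient \(\nabla(u\cdot g_\tau)\), which gives \eqref{eq:c-equation}.
\end{itemize}

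This Lamb-type identity is the only step that is not pure bookkeeping. The point to check there is that the transpose convention \(((\nabla u)^Tg)_i=(\partial_iu_j)g_j\) matches the formula, and that exactness of \(g_\tau\) uses only the periodic Hodge decomposition. No divergence-free property of \(z_\tau\) is needed, and indeed none holds. Smoothness on compact subintervals justifies every differentiation used above.
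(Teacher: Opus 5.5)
Your proposal is correct and follows the paper's proof. It obtains the zero initial data from the divergence-free restart state, the \(r_\tau\) equation by subtraction, and the \(g_\tau\) equation by applying \(\Qq\) to the passive equation; for \(c_\tau\) it applies \(\Pp\) to the passive equation, compares with the Oseen equation, and uses the identity \((u\cdot\nabla)g_\tau=\nabla(u\cdot g_\tau)-(\nabla u)^Tg_\tau\). You simply write out the bookkeeping that the paper leaves implicit, namely the explicit potential \(\phi\), the transpose convention, and the fact that \(\Pp\) kills the mean-zero gradient.
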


\begin{proof}
At the restart time, \(z_\tau(\tau)=w(\tau)\) is divergence-free, proving
\eqref{eq:zero-initial}. Subtracting \eqref{eq:passive} from
\eqref{eq:Oseen} gives \eqref{eq:r-equation}.

To obtain \eqref{eq:c-equation}, apply \(\Pp\) to the passive equation and
compare it with the Oseen equation. Since \(z_\tau=v_\tau+g_\tau\), use
\[
 (u\cdot\nabla)g_\tau
 =\nabla(u\cdot g_\tau)-(\nabla u)^Tg_\tau
\]
for the gradient field \(g_\tau\), and note that \(\Pp\) annihilates the
gradient term. Applying \(\Qq\) to the passive equation gives
\eqref{eq:g-equation}.
\end{proof}

\begin{theorem}[Oseen--passive relative pressure-work identity]
\label{thm:relative-ledger}
For every fixed \(\tau\in(s,T_*)\) and every \(t\in(\tau,T_*)\), the three
equalities in \eqref{eq:main-ledger} hold.
\end{theorem}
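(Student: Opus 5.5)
The plan is to test the restart equation \eqref{eq:r-equation} of Lemma~\ref{lem:restart-system} against \(r_\tau\) on a compact preterminal interval \([\tau,t]\), where every field is smooth. The identities then pass to the energy class by the approximation remark at the start of this section; since \(t<T_*\) is fixed, no terminal limit is involved. Pairing \eqref{eq:r-equation} with \(r_\tau\) in \(L^2\) produces four terms. The time derivative gives \(\frac12\frac{\dd}{\dd t}\norm{r_\tau}_2^2\). The transport term \(\ip{(u\cdot\nabla)r_\tau}{r_\tau}\) vanishes because \(\nabla\cdot u=0\). The diffusion term gives \(-\nu\norm{\nabla r_\tau}_2^2\). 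The pressure term is \(\ip{\nabla\pi_w}{r_\tau}\). By the Hodge relations \eqref{eq:Hodge-relations}, \(\Qq r_\tau=-g_\tau\), and \(\nabla\pi_w\) is a gradient, so
\[
 \ip{\nabla\pi_w}{r_\tau}=\ip{\nabla\pi_w}{\Qq r_\tau}=-\ip{\nabla\pi_w}{g_\tau}.
\]
Moving the pressure term to the right-hand side and integrating from \(\tau\) to \(t\), with \(r_\tau(\tau)=0\) by \eqref{eq:zero-initial}, gives the first equality in \eqref{eq:main-ledger}.

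For the second equality, I will integrate by parts in space:
\[
 \ip{\nabla\pi_w}{g_\tau}=-\int_\Omega\pi_w\,\nabla\cdot g_\tau\dd x .
\]
Since \(\nabla\cdot v_\tau=\nabla\cdot\Pp z_\tau=0\), we have \(\nabla\cdot g_\tau=\nabla\cdot z_\tau\), which gives the middle line.

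For the third equality, I will use \eqref{eq:pressure} in the form \(\nabla\pi_w=-\Qq[(u\cdot\nabla)w]\). Since \(\Qq\) is self-adjoint and \(\Qq g_\tau=g_\tau\),
\[
 \ip{\nabla\pi_w}{g_\tau}=-\ip{(u\cdot\nabla)w}{g_\tau}=-\int_\Omega u_j\,\partial_jw_i\,(g_\tau)_i\dd x .
\]
Integrating by parts in \(x_j\) and using \(\partial_ju_j=0\) turns this into \(\int_\Omega u_jw_i\,\partial_j(g_\tau)_i\dd x\). The index labelling in \eqref{eq:main-ledger} is \(u_iw_j\partial_j(g_\tau)_i\), so I must check which form is intended. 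The two forms agree because \(g_\tau=\nabla\phi\) is a gradient, so \(\partial_j(g_\tau)_i=\partial_i\partial_j\phi\) is symmetric in \(i,j\). Hence swapping \(i\) and \(j\) is harmless, and the printed expression coincides with mine.

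The step that needs care is justifying the pairing and the integrations by parts in the energy class rather than only for smooth fields. On \([\tau,t]\) with \(t<T_*\) the drift \(u\) is smooth and bounded. The fields \(w\) and \(z_\tau\) belong to \(L^\infty L^2\cap L^2H^1\), so \(u_jw_i\partial_j(g_\tau)_i\) is integrable. For the time derivative, I will mollify the data \(f\), derive the identity for the resulting smooth solutions, and pass to the limit using the energy equalities \eqref{eq:two-energies} of Lemma~\ref{lem:energy-Hodge} together with the uniqueness of energy solutions of the linear problems with a smooth drift. Each term is continuous under \(L^\infty L^2\cap L^2H^1\) convergence on the compact interval, so the limit passage goes through.
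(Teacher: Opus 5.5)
Your proposal is correct and follows essentially the same route as the paper. You pair \eqref{eq:r-equation} with \(r_\tau\), use \(\Qq r_\tau=-g_\tau\) together with the potential character of \(\nabla\pi_w\), and integrate from the zero restart state. You then get the other two forms by integrating by parts and using \(\nabla\pi_w=-\Qq[(u\cdot\nabla)w]\) with the Hessian symmetry \(\partial_j(g_\tau)_i=\partial_i(g_\tau)_j\). Your approximation paragraph is harmless but does more than needed: for \(\tau>s\) the fields are already smooth on the compact interval \([\tau,t]\), which is exactly where the paper carries out the computation.
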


\begin{proof}
Take the \(L^2\) inner product of \eqref{eq:r-equation} with \(r_\tau\).
The divergence-free drift contributes zero, so
\begin{equation}
 \frac12\frac{\dd}{\dd t}\norm{r_\tau}_2^2
 +\nu\norm{\nabla r_\tau}_2^2
 =-\ip{\nabla\pi_w}{r_\tau}.
 \label{eq:r-energy-step}
\end{equation}
By \eqref{eq:Hodge-relations}, \(\Qq r_\tau=-g_\tau\). Because
\(\nabla\pi_w\) lies in the potential block,
\[
 -\ip{\nabla\pi_w}{r_\tau}
 =\ip{\nabla\pi_w}{g_\tau}.
\]
Integrating from \(\tau\), where \(r_\tau(\tau)=0\), proves the first
equality in \eqref{eq:main-ledger}. Integration by parts gives
\[
 \ip{\nabla\pi_w}{g_\tau}
 =-\int_\Omega\pi_w\nabla\cdot g_\tau\dd x
 =-\int_\Omega\pi_w\nabla\cdot z_\tau\dd x.
\]

Finally, \eqref{eq:pressure} and the fact that \(g_\tau\) is potential yield
\begin{align*}
 \ip{\nabla\pi_w}{g_\tau}
 &=-\ip{(u\cdot\nabla)w}{g_\tau}\\
 &=\int_\Omega u_jw_i\partial_j(g_\tau)_i\dd x\\
 &=\int_\Omega u_iw_j\partial_j(g_\tau)_i\dd x.
\end{align*}
The last step uses
\(\partial_j(g_\tau)_i=\partial_i(g_\tau)_j\) and then exchanges the dummy
indices.
\end{proof}

\begin{remark}[Accumulated positivity]
Theorem~\ref{thm:relative-ledger} does not say that
\(\ip{\nabla\pi_w(t)}{g_\tau(t)}\) is pointwise nonnegative. Its time
integral from the common zero initial state is nonnegative because it equals
relative energy plus viscous dissipation. The identity is not a third
independent energy budget; it is an exact recombination of the constrained
and passive dynamics through their Hodge blocks.
\end{remark}

%% file: sections/atom.tex
\section{Atom inheritance and the pressure-work lower bound}
\label{sec:atom}

\begin{proposition}[Preservation of terminal atomic mass]
\label{prop:inheritance}
If \(\Aa_a[w]\ge m>0\), then for every fixed \(\tau\in(s,T_*)\),
\[
 \Aa_a[r_\tau]\ge m,
 \qquad
 \Aa_a[c_\tau]\ge m.
\]
\end{proposition}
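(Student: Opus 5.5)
The argument is a triangle inequality in \(L^2(B_\rho(a))\). It combines the atom of \(w\) with the terminal diffuseness of the passive fields from Proposition~\ref{prop:diffuse}.

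Fix \(\tau\in(s,T_*)\) and \(\rho>0\) small. Since \(r_\tau=w-z_\tau\) pointwise in time, Minkowski's inequality on the ball gives
\[
 \norm{w(t)}_{L^2(B_\rho(a))}\le\norm{r_\tau(t)}_{L^2(B_\rho(a))}+\norm{z_\tau(t)}_{L^2(B_\rho(a))}
\]
for all \(t\in(\tau,T_*)\). Take \(\delta=(T_*-\tau)/2\). By \eqref{eq:z-uniform}, for \(t>\tau+\delta\),
\[
 \norm{z_\tau(t)}_{L^2(B_\rho(a))}\le M_\tau\abs{B_\rho}^{1/2}\le CM_\tau\rho^{3/2},
\]
where \(M_\tau=\sup_{\tau+\delta<t<T_*}\norm{z_\tau(t)}_\infty<\infty\). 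Taking the terminal limsup, and using that the \(z_\tau\) contribution is bounded uniformly in \(t\), we get
\[
 \limsup_{t\uparrow T_*}\norm{w(t)}_{L^2(B_\rho)}\le\limsup_{t\uparrow T_*}\norm{r_\tau(t)}_{L^2(B_\rho)}+CM_\tau\rho^{3/2}.
\]
Both limsups are monotone in \(\rho\), and the square root is continuous and monotone, so the limit defining \(\Aa_a\) commutes with square roots. Letting \(\rho\downarrow0\) gives \(\sqrt{\Aa_a[w]}\le\sqrt{\Aa_a[r_\tau]}\), hence \(\Aa_a[r_\tau]\ge m\).

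For \(c_\tau=w-v_\tau\) the argument is identical. Here \(v_\tau=\Pp z_\tau\) is not bounded in \(L^\infty\), so I use \eqref{eq:PQ-uniform} with some fixed \(p\in(2,\infty)\), say \(p=4\). Hölder's inequality gives
\[
 \norm{v_\tau(t)}_{L^2(B_\rho)}\le\norm{v_\tau(t)}_{L^4}\abs{B_\rho}^{1/4},
\]
which is \(O(\rho^{3/4})\) uniformly for late \(t\). The same limit argument then yields \(\Aa_a[c_\tau]\ge m\).

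There is no real obstacle here. The only point needing care is the order of limits. The bound on the diffuse piece must be uniform in \(t\) near \(T_*\) for each fixed \(\rho\) and fixed \(\tau\), which is exactly what Proposition~\ref{prop:diffuse} provides on \((\tau+\delta,T_*)\). Consequently \(\tau\) cannot be let tend to \(T_*\) in this argument, consistent with Remark~\ref{rem:order-limits}. One should also note that the reverse inequalities hold by the same reasoning, so in fact \(\Aa_a[r_\tau]=\Aa_a[c_\tau]=\Aa_a[w]\), but only the lower bound is needed.
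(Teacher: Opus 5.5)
Your proof is correct and is essentially the paper's argument: the triangle inequality in \(L^2(B_\rho(a))\), terminal diffuseness from Proposition~\ref{prop:diffuse} (a late-time \(L^\infty\) bound for \(z_\tau\), an \(L^p\) bound with \(p>2\) plus H\"older for \(v_\tau\)), the terminal limsup at fixed \(\rho\), and then \(\rho\downarrow0\) with the square root commuting with both limits. Your extra remark that the reverse triangle inequality yields \(\Aa_a[r_\tau]=\Aa_a[c_\tau]=\Aa_a[w]\) is also valid, though only the lower bound is needed.
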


\begin{proof}
Fix \(\tau\) and choose \(\delta=(T_*-\tau)/2\).
Proposition~\ref{prop:diffuse} implies
\begin{equation}
 \lim_{\rho\downarrow0}
 \limsup_{t\uparrow T_*}\norm{z_\tau(t)}_{L^2(B_\rho(a))}=0.
 \label{eq:z-local-zero}
\end{equation}
The reverse triangle inequality and \(r_\tau=w-z_\tau\) give
\[
 \norm{r_\tau(t)}_{L^2(B_\rho(a))}
 \ge \norm{w(t)}_{L^2(B_\rho(a))}
      -\norm{z_\tau(t)}_{L^2(B_\rho(a))}.
\]
For fixed \(\rho\), first take the terminal limsup to obtain
\[
 \limsup_{t\uparrow T_*}\norm{r_\tau(t)}_{L^2(B_\rho(a))}
 \ge \max\!\left\{
 \limsup_{t\uparrow T_*}\norm{w(t)}_{L^2(B_\rho(a))}
 -\limsup_{t\uparrow T_*}\norm{z_\tau(t)}_{L^2(B_\rho(a))},0
 \right\}.
\]
By \eqref{eq:atomic-functional} and monotonicity and continuity of the square
root, every
\(L^2\)-bounded field \(q\) satisfies
\[
 \Aa_a[q]^{1/2}
 =\lim_{\rho\downarrow0}\limsup_{t\uparrow T_*}
   \norm{q(t)}_{L^2(B_\rho(a))}.
\]
Letting \(\rho\downarrow0\) and using \eqref{eq:z-local-zero} therefore gives
\(\Aa_a[r_\tau]^{1/2}\ge\Aa_a[w]^{1/2}\). Both sides are nonnegative, so
squaring at this stage yields \(\Aa_a[r_\tau]\ge\Aa_a[w]\). Repeating the
argument with
\(c_\tau=w-v_\tau\) and the \(L^p\)-diffuseness of \(v_\tau\) proves the
second inequality. No local Hodge orthogonality is used.
\end{proof}

\begin{proposition}[Every-restart work lower bound]
\label{prop:work-tax}
Under the assumptions of Proposition~\ref{prop:inheritance},
\eqref{eq:main-work-lower} and \eqref{eq:main-product-lower} hold for every
fixed \(\tau\in(s,T_*)\).
\end{proposition}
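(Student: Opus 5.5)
The plan is to combine the relative pressure-work identity of Theorem~\ref{thm:relative-ledger} with the atom inheritance of Proposition~\ref{prop:inheritance}. Fix \(\tau\in(s,T_*)\). By \eqref{eq:main-ledger}, for every \(t\in(\tau,T_*)\),
\[
 \int_\tau^t\ip{\nabla\pi_w}{g_\tau}\dd\rho
 \ge\frac12\norm{r_\tau(t)}_2^2
 \ge\frac12\int_{B_\rho(a)}\abs{r_\tau(t,x)}^2\dd x
\]
for any \(\rho>0\). Taking \(\limsup_{t\uparrow T_*}\) for fixed \(\rho\), the left side does not depend on \(\rho\). Then letting \(\rho\downarrow0\) and using Definition~\ref{def:atom} together with \(\Aa_a[r_\tau]\ge m\) gives \eqref{eq:main-work-lower}. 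This step is immediate once the identity is available for all \(t<T_*\), which Theorem~\ref{thm:relative-ledger} provides.

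For the product majorant \eqref{eq:main-product-lower}, I will use the last line of \eqref{eq:main-ledger}. Pointwise in time,
\[
 \Bigl|\int_\Omega u_iw_j\partial_j(g_\tau)_i\dd x\Bigr|
 \le\norm{u}_\infty\norm{w}_2\norm{\nabla g_\tau}_2
 \le F_\tau\norm{u}_\infty\norm{\nabla g_\tau}_2,
\]
where \(\norm{w(\rho)}_2\le F_\tau\) for \(\rho\ge\tau\) by the energy identity \eqref{eq:two-energies}. Hence for every \(t\),
\[
 \int_\tau^t\ip{\nabla\pi_w}{g_\tau}\dd\rho
 \le F_\tau\int_\tau^{T_*}\norm{u}_\infty\norm{\nabla g_\tau}_2\dd\rho,
\]
with the right side in \([0,+\infty]\). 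Taking the limsup and invoking \eqref{eq:main-work-lower} gives the first inequality. The second inequality follows from \(F_\tau=\norm{w(\tau)}_2\le\norm{w(s)}_2=\norm{f}_2\), again by \eqref{eq:two-energies} applied between \(s\) and \(\tau\). Since \(F_\tau>0\) (otherwise \(w\equiv0\) after \(\tau\), contradicting the atom), division is legitimate.

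The only point needing care is measurability and integrability of the integrand in the product bound near \(T_*\). This is handled by working on \([\tau,t]\) with \(t<T_*\), where all fields are smooth, and only then passing to the monotone limit \(t\uparrow T_*\) of a nonnegative integrand. I do not expect a genuine obstacle here; the substance lies in the earlier identity and the inheritance result.
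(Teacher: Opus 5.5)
Your proposal is correct and follows the paper's proof: \eqref{eq:main-work-lower} comes from discarding the dissipation in \eqref{eq:main-ledger} and using \(\Aa_a[r_\tau]\ge m\). The paper phrases this through a sequence \(t_k\uparrow T_*\) with \(\norm{r_\tau(t_k)}_2^2\ge m-o(1)\), and your restriction to \(B_\rho(a)\) is the same step; the product bound \eqref{eq:main-product-lower} then follows, as in the paper, from the pointwise majorant \(\norm{u}_\infty\norm{w}_2\norm{\nabla g_\tau}_2\), \(\norm{w}_2\le F_\tau\), \(F_\tau>0\) forced by the atom, and \(F_\tau\le\norm{f}_2\).
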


\begin{proof}
By Proposition~\ref{prop:inheritance}, there is a sequence
\(t_k\uparrow T_*\) such that
\begin{equation}
 \norm{r_\tau(t_k)}_2^2\ge m-o(1).
 \label{eq:r-global-lower}
\end{equation}
Insert this sequence into \eqref{eq:main-ledger} and retain the nonnegative
dissipation term. This gives \eqref{eq:main-work-lower}.

The last representation in \eqref{eq:main-ledger} satisfies
\begin{equation}
 \left|\int_\Omega u_iw_j\partial_j(g_\tau)_i\dd x\right|
 \le \norm{u}_\infty\norm{w}_2\norm{\nabla g_\tau}_2
 \le F_\tau\norm{u}_\infty\norm{\nabla g_\tau}_2,
 \label{eq:direct-majorant}
\end{equation}
where the Oseen energy identity gives \(\norm{w(t)}_2\le F_\tau\).
Since \(m\le\Aa_a[w]\le F_\tau^2\), we have \(F_\tau>0\).
Combining \eqref{eq:direct-majorant} with \eqref{eq:main-work-lower} proves
the first inequality in \eqref{eq:main-product-lower}. The second follows
from \(F_\tau\le\norm{f}_2\).
\end{proof}

\begin{remark}[A sufficient exclusion condition]
\label{rem:minimal-gate}
Define
\[
 I_f(\tau)=\int_\tau^{T_*}
 \norm{u(t)}_\infty\norm{\nabla g_\tau(t)}_2\dd t
 \in[0,+\infty].
\]
To exclude a terminal atom for one fixed solution, it would be enough to prove
\begin{equation}
 \liminf_{\tau\uparrow T_*}I_f(\tau)=0.
 \label{eq:minimal-gate}
\end{equation}
The full limit is unnecessary. Condition \eqref{eq:minimal-gate} is not
proved here; Proposition~\ref{prop:sharpness} below explains why it does not
follow from the currently separated tail norms.
\end{remark}

%% file: sections/negative.tex
\section{Weak-tail topology separation}
\label{sec:negative}

The response fields start from zero, which yields genuine smallness in a
negative spatial topology. On \(\T^3\), all of \(r_\tau,g_\tau,c_\tau\) have
zero spatial mean: \(w\) and \(z_\tau\) have the same conserved mean,
\(\Qq\) annihilates constants, and \eqref{eq:Hodge-relations} applies.
Thus homogeneous negative Sobolev norms are unambiguous.

\begin{proposition}[Negative-order tail smallness]
\label{prop:Hminus}
For \(q_\tau\in\{r_\tau,g_\tau,c_\tau\}\), estimate
\eqref{eq:main-Hminus} holds.
\end{proposition}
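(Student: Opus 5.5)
We need to bound \(\sup_{\tau<t<T_*}\norm{q_\tau(t)}_{\dot H^{-1}}\le C_\Omega F_\tau A(\tau)\) for \(q_\tau\in\{r_\tau,g_\tau,c_\tau\}\). The plan is to treat \(r_\tau\) first by Duhamel from zero initial data and then transfer the bound to \(g_\tau=-\Qq r_\tau\) and \(c_\tau=\Pp r_\tau\) using \eqref{eq:Hodge-relations}. Since \(\Pp\) and \(\Qq\) are orthogonal Fourier multipliers of norm at most one on every homogeneous Sobolev space, \(\norm{\Pp r_\tau}_{\dot H^{-1}}\) and \(\norm{\Qq r_\tau}_{\dot H^{-1}}\) are both bounded by \(\norm{r_\tau}_{\dot H^{-1}}\). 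This reduction is immediate, and zero mean of \(r_\tau\) makes the norm meaningful, as noted at the start of the section.

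For \(r_\tau\), rewrite \eqref{eq:r-equation} in divergence form. Because \(\nabla\cdot u=0\) and \(\nabla\pi_w=-\Qq[(u\cdot\nabla)w]\) by \eqref{eq:pressure},
\[
 \partial_tr_\tau-\nu\Delta r_\tau
 =-\nabla\cdot(u\otimes r_\tau)+\Qq\,\nabla\cdot(u\otimes w)
 =-\nabla\cdot(u\otimes w)+\nabla\cdot(u\otimes z_\tau)+\Qq\,\nabla\cdot(u\otimes w).
\]
Equivalently, \(r_\tau=w-z_\tau\) with
\[
 \partial_tr_\tau-\nu\Delta r_\tau=-\Pp\nabla\cdot(u\otimes w)+\nabla\cdot(u\otimes z_\tau).
\]
Duhamel from \(r_\tau(\tau)=0\) then gives
\[
 r_\tau(t)=\int_\tau^t e^{\nu(t-\rho)\Delta}\bigl[-\Pp\nabla\cdot(u\otimes w)+\nabla\cdot(u\otimes z_\tau)\bigr](\rho)\dd\rho .
\]

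Now estimate in \(\dot H^{-1}\). The heat semigroup contracts, \(\Pp\) is bounded, and \(\norm{\nabla\cdot G}_{\dot H^{-1}}\le\norm{G}_2\) on zero-mean modes (the divergence kills the zero mode). Hence
\[
 \norm{r_\tau(t)}_{\dot H^{-1}}\le C_\Omega\int_\tau^t\norm{u(\rho)}_\infty\bigl(\norm{w(\rho)}_2+\norm{z_\tau(\rho)}_2\bigr)\dd\rho .
\]
Both \(\norm{w(\rho)}_2\) and \(\norm{z_\tau(\rho)}_2\) are bounded by \(F_\tau\) by \eqref{eq:two-energies}. This yields \(\norm{r_\tau(t)}_{\dot H^{-1}}\le 2C_\Omega F_\tau A(\tau)\) uniformly in \(t\).

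The main point requiring care is justification rather than estimation. The Duhamel formula and the divergence-form rewriting must hold in the energy class, and the pressure representation must be the zero-mean one. I would argue on compact preterminal intervals, where everything is smooth, as announced at the beginning of Section~\ref{sec:ledger}. Since the bound is uniform in \(t<T_*\), the supremum over \((\tau,T_*)\) follows, and \(A(\tau)\to0\) gives the stated decay as \(\tau\uparrow T_*\).
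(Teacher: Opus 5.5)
Your proof is correct and follows essentially the paper's argument: Duhamel from the zero restart datum in \(\dot H^{-1}\), the bound \(\norm{\nabla\cdot F}_{\dot H^{-1}}\le\norm{F}_2\) applied with \(\norm{u}_\infty\) times the energy bounds \(\norm{w}_2,\norm{z_\tau}_2\le F_\tau\), and integration in time to produce \(A(\tau)\). The differences are organizational only. You absorb the pressure into \(-\Pp\nabla\cdot(u\otimes w)\) instead of bounding \(\norm{\pi_w}_2\) through the Riesz transforms. You also obtain \(g_\tau=-\Qq r_\tau\) and \(c_\tau=\Pp r_\tau\) from the contractivity of the Hodge multipliers on \(\dot H^{-1}\), whereas the paper runs a separate Duhamel estimate for \(g_\tau\) from the passive equation and then uses \(c_\tau=r_\tau+g_\tau\).
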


\begin{proof}
Applying \(\Qq\) to the passive equation and using
\(\Qq z_\tau(\tau)=0\), the heat Duhamel formula gives
\begin{equation}
 g_\tau(t)
 =-\int_\tau^t e^{\nu(t-\rho)\Delta}
 \Qq\nabla\cdot(z_\tau\otimes u)(\rho)\dd\rho.
 \label{eq:g-Duhamel}
\end{equation}
The heat semigroup and \(\Qq\) are bounded on \(\dot H^{-1}\), and
\begin{equation}
 \norm{\nabla\cdot F}_{\dot H^{-1}}\le\norm{F}_2.
 \label{eq:div-Hminus}
\end{equation}
Using \(\norm{z_\tau(\rho)}_2\le F_\tau\), we obtain
\begin{equation}
 \sup_{\tau<t<T_*}\norm{g_\tau(t)}_{\dot H^{-1}}
 \le C_\Omega F_\tau A(\tau).
 \label{eq:g-Hminus}
\end{equation}

The Duhamel formula for \eqref{eq:r-equation} starts from zero and contains
\(-\nabla\cdot(r_\tau\otimes u)-\nabla\pi_w\). The energy estimates give
\(\norm{r_\tau(t)}_2\le2F_\tau\), while the periodic Riesz transforms in
\eqref{eq:pressure} give
\begin{equation}
 \norm{\pi_w(t)}_2
 \le C_\Omega\norm{u(t)}_\infty\norm{w(t)}_2
 \le C_\Omega F_\tau\norm{u(t)}_\infty.
 \label{eq:pressure-L2}
\end{equation}
Together with
\(\norm{\nabla\pi_w}_{\dot H^{-1}}\le\norm{\pi_w}_2\), this yields
\begin{equation}
 \sup_{\tau<t<T_*}\norm{r_\tau(t)}_{\dot H^{-1}}
 \le C_\Omega F_\tau A(\tau).
 \label{eq:r-Hminus}
\end{equation}
Finally \(c_\tau=r_\tau+g_\tau\), proving the proposition.
\end{proof}

\begin{proposition}[Time-integrated energy smallness]
\label{prop:L4}
For \(q_\tau\in\{r_\tau,g_\tau,c_\tau\}\), estimate
\eqref{eq:main-L4} holds.
\end{proposition}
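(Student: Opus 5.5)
The estimate to prove is \(\norm{q_\tau}_{L^4((\tau,T_*);L^2)}\le C_\Omega\nu^{-1/4}F_\tau A(\tau)^{1/2}\) for \(q_\tau\in\{r_\tau,g_\tau,c_\tau\}\). The plan is to interpolate between the uniform \(\dot H^{-1}\) bound of Proposition~\ref{prop:Hminus} and a time-integrated \(\dot H^1\) bound from the energy identities. Each \(q_\tau\) has zero spatial mean, so the periodic interpolation inequality
\[
 \norm{q}_2^2\le\norm{q}_{\dot H^{-1}}\norm{\nabla q}_2
\]
holds. It is just Cauchy--Schwarz on Fourier coefficients, \(\sum\abs{\widehat q}^2=\sum(\abs{k}^{-1}\abs{\widehat q})(\abs{k}\abs{\widehat q})\).

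Raising this to the second power and integrating in time gives
\[
 \int_\tau^{T_*}\norm{q_\tau}_2^4\dd t
 \le\sup_{\tau<t<T_*}\norm{q_\tau(t)}_{\dot H^{-1}}^2
 \int_\tau^{T_*}\norm{\nabla q_\tau}_2^2\dd t .
\]
The first factor is at most \(C_\Omega F_\tau^2A(\tau)^2\) by \eqref{eq:main-Hminus}.

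For the second factor, Lemma~\ref{lem:energy-Hodge} gives
\[
 2\nu\int_\tau^{t}\norm{\nabla w}_2^2\le F_\tau^2,
 \qquad
 2\nu\int_\tau^{t}\norm{\nabla z_\tau}_2^2\le F_\tau^2
\]
for every \(t<T_*\). Letting \(t\uparrow T_*\), both dissipation integrals over \((\tau,T_*)\) are bounded by \(F_\tau^2/(2\nu)\). Each response field is then controlled as follows:
\begin{itemize}
 \item \(r_\tau=w-z_\tau\), so the triangle inequality gives \(\int\norm{\nabla r_\tau}_2^2\le 2F_\tau^2/\nu\);
 \item \(g_\tau=\Qq z_\tau\), so \(\norm{\nabla g_\tau}_2\le\norm{\nabla z_\tau}_2\) by \eqref{eq:Hodge-orthogonality};
 \item \(c_\tau=w-\Pp z_\tau\), so the same orthogonality applies.
\end{itemize}
Hence \(\int_\tau^{T_*}\norm{\nabla q_\tau}_2^2\dd t\le C F_\tau^2\nu^{-1}\) in all three cases.

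Combining the two factors,
\[
 \norm{q_\tau}_{L^4L^2}^4\le C_\Omega F_\tau^4A(\tau)^2\nu^{-1}.
\]
Taking fourth roots gives exactly \(C_\Omega\nu^{-1/4}F_\tau A(\tau)^{1/2}\), matching \eqref{eq:main-L4}.

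The computations are routine, so the main obstacle is justification. We need the zero-mean property, which is recorded at the start of this section, so that the homogeneous norms are legitimate. We also need to pass from compact preterminal intervals, where all fields are smooth, to \((\tau,T_*)\); monotone convergence handles this, since the bounds are uniform in the right endpoint. The vanishing as \(\tau\uparrow T_*\) then follows from \(F_\tau\le\norm{f}_2\) and \(A(\tau)\to0\), the latter by integrability in \eqref{eq:A-finite}.
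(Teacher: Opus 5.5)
Your proof is correct and follows the paper's argument: square the homogeneous interpolation \(\norm{q}_2^2\le\norm{q}_{\dot H^{-1}}\norm{\nabla q}_2\), integrate in time, and combine the uniform \(\dot H^{-1}\) bound of Proposition~\ref{prop:Hminus} with the dissipation bounds from Lemma~\ref{lem:energy-Hodge}. The only cosmetic difference is that you bound \(\nabla c_\tau\) through \(c_\tau=w-\Pp z_\tau\) and Hodge orthogonality, whereas the paper uses \(c_\tau=r_\tau+g_\tau\); both give the same \(O(F_\tau^2/\nu)\) gradient bound.
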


\begin{proof}
Lemma~\ref{lem:energy-Hodge} gives
\begin{equation}
 \int_\tau^{T_*}\norm{\nabla g_\tau}_2^2\dd t
 \le \frac{F_\tau^2}{2\nu}.
 \label{eq:g-gradient}
\end{equation}
Since \(r_\tau=w-z_\tau\), the two energy identities in
\eqref{eq:two-energies} imply
\begin{equation}
 \int_\tau^{T_*}\norm{\nabla r_\tau}_2^2\dd t
 \le \frac{2F_\tau^2}{\nu}.
 \label{eq:r-gradient}
\end{equation}
The relation \(c_\tau=r_\tau+g_\tau\) gives the same type of bound for
\(c_\tau\).

For every zero-mean periodic field \(q\), Fourier Cauchy--Schwarz yields
\begin{equation}
 \norm{q}_2^2\le\norm{q}_{\dot H^{-1}}\norm{\nabla q}_2.
 \label{eq:Hminus-H1-interpolation}
\end{equation}
Square \eqref{eq:Hminus-H1-interpolation}, integrate in time, and combine
Proposition~\ref{prop:Hminus} with
\eqref{eq:g-gradient}--\eqref{eq:r-gradient}. Taking fourth roots proves
\eqref{eq:main-L4}.
\end{proof}

\begin{remark}[Why homogeneous norms matter]
The clean factor \(\nu^{-1/4}\) in \eqref{eq:main-L4} uses the homogeneous
interpolation \eqref{eq:Hminus-H1-interpolation} and the zero mean of the
restart responses. An inhomogeneous \(H^{-1}\)--\(H^1\) argument produces
an additional low-frequency term of order \((T_*-\tau)^{1/4}\).
\end{remark}

\begin{remark}[Topology separation]
The conclusion
\[
 q_\tau\longrightarrow0
 \quad\text{in}\quad
 L_t^\infty\dot H_x^{-1}\cap L_t^4L_x^2
\]
does not contradict \(\Aa_a[q_\tau]\ge m\) for each fixed \(\tau\). The
atomic functional contains a terminal time limsup and can be supported on
successively shorter time spikes, which are invisible to the \(L_t^4\) norm.
\end{remark}

%% file: sections/localization.tex
\section{Mesoscopic localization of the pressure-work lower bound}
\label{sec:localization}

The global identity \eqref{eq:main-ledger} does not localize by simply
inserting a cutoff: Hodge orthogonality is global, and multiplication by a
cutoff does not commute with \(\Pp\) or \(\Qq\). We therefore begin from the
relative equation itself.

\begin{lemma}[Local relative-energy identity]
\label{lem:local-energy}
Let \(\psi\ge0\) be smooth on \([\tau,t]\times\Omega\). Then
\begin{equation}
 \begin{aligned}
 &\frac12\int_\Omega\psi(t)\abs{r_\tau(t)}^2\dd x
 +\nu\int_\tau^t\int_\Omega\psi\abs{\nabla r_\tau}^2\dd x\dd\rho\\
 &=
 \frac12\int_\tau^t\int_\Omega
 (\partial_\rho\psi+u\cdot\nabla\psi+\nu\Delta\psi)
 \abs{r_\tau}^2\dd x\dd\rho\\
&\quad+
 \int_\tau^t\int_\Omega
 \psi\,\nabla\pi_w\cdot g_\tau\dd x\dd\rho
 +\int_\tau^t\int_\Omega
 \pi_w c_\tau\cdot\nabla\psi\dd x\dd\rho .
 \end{aligned}
 \label{eq:local-energy}
\end{equation}
\end{lemma}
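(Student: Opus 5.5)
The plan is to test the relative equation \eqref{eq:r-equation} against \(\psi r_\tau\), integrate over \(\Omega\) and then over \([\tau,t]\), using \(r_\tau(\tau)=0\) from \eqref{eq:zero-initial}. As in Section~\ref{sec:ledger}, I would first work on a compact preterminal interval, where every field is smooth, and pass to the energy class by approximation only at the end.

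Three of the terms are routine. The time derivative gives \(\frac12\frac{\dd}{\dd\rho}\int\psi\abs{r_\tau}^2-\frac12\int\partial_\rho\psi\abs{r_\tau}^2\). For the transport term I would write \(\int\psi\, r_\tau\cdot(u\cdot\nabla)r_\tau=\frac12\int\psi\, u\cdot\nabla\abs{r_\tau}^2\). Since \(\nabla\cdot u=0\), this equals \(-\frac12\int(u\cdot\nabla\psi)\abs{r_\tau}^2\). Moved to the right-hand side, it produces the \(u\cdot\nabla\psi\) contribution. For diffusion, \(\nu\int\psi\, r_\tau\cdot\Delta r_\tau=-\nu\int\psi\abs{\nabla r_\tau}^2-\frac{\nu}{2}\int\nabla\psi\cdot\nabla\abs{r_\tau}^2\). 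A second integration by parts turns the last piece into \(+\frac{\nu}{2}\int\Delta\psi\abs{r_\tau}^2\). Together these give the first line on the right of \eqref{eq:local-energy} and the dissipation on the left.

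The pressure term is the only point needing care, because Hodge orthogonality is no longer available once the cutoff \(\psi\) is present. I would split \(r_\tau=c_\tau-g_\tau\) using \eqref{eq:Hodge-relations}. This gives
\[
 -\int\psi\nabla\pi_w\cdot r_\tau
 =\int\psi\nabla\pi_w\cdot g_\tau-\int\psi\nabla\pi_w\cdot c_\tau .
\]
For the second piece I would integrate by parts using \(\nabla\cdot c_\tau=0\). Both \(w\) and \(v_\tau=\Pp z_\tau\) are divergence-free, so this holds and
\[
 -\int\psi\nabla\pi_w\cdot c_\tau=\int\pi_w\,\nabla\cdot(\psi c_\tau)=\int\pi_w\, c_\tau\cdot\nabla\psi .
\]
These are exactly the last two terms of \eqref{eq:local-energy}.

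The remaining task is justification in the energy class. All the integrands make sense there, since \(\pi_w\in L^2\) by \eqref{eq:pressure-L2} and \(c_\tau,g_\tau,r_\tau\in L^\infty_tL^2\cap L^2_tH^1\). I expect the main (mild) obstacle to be confirming that each term passes to the limit under smooth approximation. On compact subintervals of \([\tau,T_*)\), however, the fields are smooth by hypothesis, so the identity holds directly for every \(t<T_*\).
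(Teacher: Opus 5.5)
Your proposal is correct and follows the paper's proof essentially verbatim. You test \eqref{eq:r-equation} with \(\psi r_\tau\), drop the initial term via \(r_\tau(\tau)=0\), and split the pressure term through \(r_\tau=c_\tau-g_\tau\) with \(\nabla\cdot c_\tau=0\), which produces the cutoff flux \(\int\pi_w c_\tau\cdot\nabla\psi\); your transport and diffusion sign computations also reproduce \eqref{eq:local-energy} exactly.
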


\begin{proof}
Test \eqref{eq:r-equation} with \(\psi r_\tau\). Integration by parts in
time, transport, and diffusion gives
\[
 \begin{aligned}
 &\frac12\int\psi(t)\abs{r_\tau(t)}^2
 +\nu\int_\tau^t\int\psi\abs{\nabla r_\tau}^2\\
 &=
 \frac12\int_\tau^t\int
 (\partial_\rho\psi+u\cdot\nabla\psi+\nu\Delta\psi)
 \abs{r_\tau}^2
 -\int_\tau^t\int\psi r_\tau\cdot\nabla\pi_w .
 \end{aligned}
\]
The initial term vanishes because \(r_\tau(\tau)=0\). Now use
\(r_\tau=c_\tau-g_\tau\) and \(\nabla\cdot c_\tau=0\):
\[
 -\int\psi r_\tau\cdot\nabla\pi_w
 =\int\psi\nabla\pi_w\cdot g_\tau
 +\int\pi_w c_\tau\cdot\nabla\psi .
\]
This proves \eqref{eq:local-energy}.
\end{proof}

For the fixed spatial cutoff \(\chi_R\) from \eqref{eq:cutoff-base}, write
\begin{equation}
 \begin{aligned}
 E_{\tau,R}(t)
 &=\frac12\int\chi_R\abs{r_\tau(t)}^2,\\
 D_{\tau,R}(t)
 &=\nu\int_\tau^t\int\chi_R\abs{\nabla r_\tau}^2,\\
 W_{\tau,R}(t)
 &=\int_\tau^t\int\chi_R\nabla\pi_w\cdot g_\tau,\\
 B_{\mathrm{tr}}(t)
 &=\frac12\int_\tau^t\int
 (u\cdot\nabla\chi_R)\abs{r_\tau}^2,\\
 B_\nu(t)
 &=\frac{\nu}{2}\int_\tau^t\int
 (\Delta\chi_R)\abs{r_\tau}^2,\\
 B_p(t)
 &=\int_\tau^t\int
 \pi_w c_\tau\cdot\nabla\chi_R .
 \end{aligned}
 \label{eq:local-terms}
\end{equation}
Lemma~\ref{lem:local-energy} becomes
\begin{equation}
 E_{\tau,R}(t)+D_{\tau,R}(t)
 =W_{\tau,R}(t)+B_{\mathrm{tr}}(t)+B_\nu(t)+B_p(t).
 \label{eq:local-ledger}
\end{equation}
The term \(B_p\) is the pressure-cutoff flux created by local Hodge
nonorthogonality; it cannot be omitted.

\begin{lemma}[Cutoff-flux bounds]
\label{lem:cutoff-flux}
Let \(\delta_\tau\) be as in \eqref{eq:delta-def}. For every radius satisfying
\(2R<\operatorname{inj}(\Omega)\), uniformly for \(s<\tau<t<T_*\),
\begin{equation}
 \abs{B_{\mathrm{tr}}(t)}
 +\abs{B_p(t)}
 +\abs{B_\nu(t)}
 \le C_{\Omega,\chi}F_\tau^2
 \left[
 \frac{A(\tau)}{R}
 +\frac{\nu\delta_\tau}{R^2}
 \right].
 \label{eq:direct-flux}
\end{equation}
\end{lemma}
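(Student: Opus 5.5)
We bound the three cutoff fluxes in \eqref{eq:local-terms} separately, using only the scaling of the cutoff together with the energy and pressure bounds already established. The scaling gives \(\norm{\nabla\chi_R}_\infty\le C_\chi R^{-1}\) and \(\norm{\Delta\chi_R}_\infty\le C_\chi R^{-2}\); because \(2R<\operatorname{inj}(\Omega)\), the geodesic-coordinate cutoff is an honest smooth periodic function with these bounds. The uniform \(L^2\) controls come from Lemma~\ref{lem:energy-Hodge}. The Oseen energy identity gives \(\norm{w(t)}_2\le F_\tau\), and the passive energy identity gives \(\norm{z_\tau(t)}_2\le F_\tau\). Hence \(\norm{r_\tau(t)}_2\le 2F_\tau\) for all \(t\in(\tau,T_*)\). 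By Hodge orthogonality, \(\norm{v_\tau}_2,\norm{g_\tau}_2\le F_\tau\), so \(\norm{c_\tau(t)}_2=\norm{w-v_\tau}_2\le 2F_\tau\).

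For the transport flux we use Hölder in space with \(u\) in \(L^\infty\):
\[
\abs{B_{\mathrm{tr}}(t)}\le \tfrac12\int_\tau^t\norm{u}_\infty\norm{\nabla\chi_R}_\infty\norm{r_\tau}_2^2\dd\rho\le C_\chi\frac{2F_\tau^2}{R}\int_\tau^{T_*}\norm{u}_\infty\dd\rho=C_\chi\frac{2F_\tau^2A(\tau)}{R}.
\]
For the viscous flux we use the sup bound on \(\Delta\chi_R\) and the length of the time interval:
\[
\abs{B_\nu(t)}\le \tfrac{\nu}{2}C_\chi R^{-2}(4F_\tau^2)(t-\tau)\le 2C_\chi F_\tau^2\frac{\nu\delta_\tau}{R^2}.
\]
Both bounds are uniform in \(t\in(\tau,T_*)\).

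The pressure flux is the only term that needs the pressure structure. Here we invoke \eqref{eq:pressure-L2}, \(\norm{\pi_w(\rho)}_2\le C_\Omega F_\tau\norm{u(\rho)}_\infty\), which comes from \(L^2\)-boundedness of \(R_iR_j\) applied to \(u_jw_i\). Cauchy--Schwarz then gives
\[
\abs{B_p(t)}\le\int_\tau^t\norm{\pi_w}_2\norm{c_\tau}_2\norm{\nabla\chi_R}_\infty\dd\rho\le C_{\Omega,\chi}\frac{F_\tau^2}{R}A(\tau).
\]
Summing the three bounds yields \eqref{eq:direct-flux} with a constant independent of \(\tau\), \(t\) and \(R\).

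The main point to check is this pressure step. We must make sure the bound uses \(\norm{w(\rho)}_2\le F_\tau\) on the whole interval \((\tau,T_*)\) rather than \(\norm{f}_2\), so that the factor is \(F_\tau^2\). Since \(F_\tau=\norm{w(\tau)}_2\) and the Oseen energy decreases from the restart time, this holds. The only other thing to verify is the interpretation of \(\chi_R\) on the torus, which the injectivity-radius condition takes care of. Everything else is routine, first on smooth preterminal intervals and then by approximation as in Section~\ref{sec:ledger}.
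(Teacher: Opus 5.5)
Your proof is correct and follows the paper's argument essentially step for step. You use the cutoff scalings \(\|\nabla\chi_R\|_\infty\le C_\chi R^{-1}\) and \(\|\Delta\chi_R\|_\infty\le C_\chi R^{-2}\), the uniform bounds \(\|r_\tau\|_2,\|c_\tau\|_2\le 2F_\tau\) from the two energy identities and Hodge orthogonality, and the Riesz-transform bound \(\|\pi_w\|_2\le C_\Omega F_\tau\|u\|_\infty\) with Cauchy--Schwarz for \(B_p\), exactly as the paper does, only with the constants tracked a bit more explicitly.
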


\begin{proof}
The energy estimates imply
\begin{equation}
 \norm{r_\tau}_{L^\infty_tL^2_x}
 +\norm{c_\tau}_{L^\infty_tL^2_x}
 \le C F_\tau,
 \qquad
 \norm{\nabla r_\tau}_{L^2_{t,x}}
 +\norm{\nabla c_\tau}_{L^2_{t,x}}
 \le C\nu^{-1/2}F_\tau.
 \label{eq:local-energy-budgets}
\end{equation}
Using \(\abs{\nabla\chi_R}\le C_\chi R^{-1}\) and the energy bound in
\eqref{eq:local-energy-budgets}, we find
\[
 \abs{B_{\mathrm{tr}}(t)}
 \le C\frac{F_\tau^2}{R}A(\tau)
\]
For the pressure-cutoff flux, Cauchy--Schwarz in space,
\eqref{eq:local-energy-budgets}, and \eqref{eq:pressure-L2} give explicitly
\begin{align}
 \abs{B_p(t)}
 &\le \frac{C_\chi}{R}\int_\tau^t
       \norm{\pi_w(\rho)}_2\norm{c_\tau(\rho)}_2\dd\rho\notag\\
 &\le C_{\Omega,\chi}\frac{F_\tau^2}{R}
       \int_\tau^t\norm{u(\rho)}_\infty\dd\rho
 \le C_{\Omega,\chi}\frac{F_\tau^2}{R}A(\tau).
 \label{eq:pressure-cutoff-flux-bound}
\end{align}
Since \(\abs{\Delta\chi_R}\le C_\chi R^{-2}\),
\[
 \abs{B_\nu(t)}
 \le C\frac{\nu F_\tau^2\delta_\tau}{R^2}.
\]
Combining the three estimates proves \eqref{eq:direct-flux}.
\end{proof}

\begin{proposition}[One-restart local lower bound]
\label{prop:one-restart-local}
If \(\Aa_a[w]\ge m\), then for every fixed \(\tau\in(s,T_*)\) and every
sufficiently small fixed \(R>0\),
\begin{equation}
 \begin{aligned}
 \limsup_{t\uparrow T_*}W_{\tau,R}(t)
 \ge \frac m2
 -C_{\Omega,\chi}F_\tau^2
 \left[
 \frac{A(\tau)}{R}
 +\frac{\nu\delta_\tau}{R^2}
 \right].
 \end{aligned}
 \label{eq:one-restart-local}
\end{equation}
\end{proposition}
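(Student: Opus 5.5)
The plan is to combine the local ledger \eqref{eq:local-ledger} with atom inheritance. Fix \(\tau\in(s,T_*)\) and a radius \(R\) with \(2R<\operatorname{inj}(\Omega)\). In \eqref{eq:local-ledger} the dissipation \(D_{\tau,R}(t)\) is nonnegative, so
\[
 W_{\tau,R}(t)\ge E_{\tau,R}(t)-\abs{B_{\mathrm{tr}}(t)}-\abs{B_\nu(t)}-\abs{B_p(t)}.
\]
Lemma~\ref{lem:cutoff-flux} bounds the three flux terms by \(C_{\Omega,\chi}F_\tau^2[A(\tau)/R+\nu\delta_\tau/R^2]\), uniformly in \(t\in(\tau,T_*)\). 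It then remains to show that
\[
 \limsup_{t\uparrow T_*}E_{\tau,R}(t)\ge \frac m2 .
\]

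For this lower bound, note that \(\chi_R=1\) on \(B_R(a)\) and \(\chi_R\ge0\). Hence for every \(\rho\le R\) we have
\[
 E_{\tau,R}(t)\ge\frac12\int_{B_\rho(a)}\abs{r_\tau(t)}^2\dd x .
\]
Taking the terminal limsup gives \(\limsup_{t}E_{\tau,R}(t)\ge\frac12\limsup_t\norm{r_\tau(t)}_{L^2(B_\rho(a))}^2\) for every \(\rho\le R\). The right side is nonincreasing as \(\rho\downarrow0\), and its limit is \(\frac12\Aa_a[r_\tau]\). Proposition~\ref{prop:inheritance} gives \(\Aa_a[r_\tau]\ge m\), so \(\limsup_t E_{\tau,R}(t)\ge m/2\) for every admissible \(R\). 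The phrase ``sufficiently small'' in the statement is needed only for the geodesic cutoff to be defined.

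To combine the two steps, choose \(t_k\uparrow T_*\) with \(E_{\tau,R}(t_k)\to\limsup_tE_{\tau,R}\). Along this sequence the flux bound is uniform, so
\[
 \limsup_t W_{\tau,R}\ge\limsup_k W_{\tau,R}(t_k)\ge \frac m2 - C_{\Omega,\chi}F_\tau^2\Bigl[\frac{A(\tau)}{R}+\frac{\nu\delta_\tau}{R^2}\Bigr],
\]
which is \eqref{eq:one-restart-local}.

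The only delicate point is that the local ledger \eqref{eq:local-ledger} must hold for all \(t<T_*\) with \(\psi=\chi_R\) time-independent. On each compact preterminal interval the fields are smooth, so testing with \(\chi_R r_\tau\) is legitimate there. The flux bound also needs to be uniform in \(t\). This holds because Lemma~\ref{lem:cutoff-flux} uses only \(L^\infty_tL^2_x\) bounds and the integrated tail \(A(\tau)\), together with \(t-\tau\le\delta_\tau\). No exchange of limits in \(\tau\) is involved, since \(\tau\) and \(R\) are fixed throughout.
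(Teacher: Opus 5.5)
Your proposal is correct and is essentially the paper's proof. The paper likewise uses $\chi_R=1$ on $B_R(a)$ with Proposition~\ref{prop:inheritance} to get $\limsup_{t\uparrow T_*}\int\chi_R\abs{r_\tau(t)}^2\ge m$. It then solves \eqref{eq:local-ledger} for $W_{\tau,R}$, discards $D_{\tau,R}\ge0$, and applies the $t$-uniform flux bound of Lemma~\ref{lem:cutoff-flux}. Your extra steps, a sequence $t_k$ realizing $\limsup E_{\tau,R}$ and the remark that ``sufficiently small'' only serves $2R<\operatorname{inj}(\Omega)$, just make explicit what the paper leaves implicit.
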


\begin{proof}
Proposition~\ref{prop:inheritance} and the fact that
\(\chi_R=1\) on \(B_R(a)\) give
\[
 \limsup_{t\uparrow T_*}\int\chi_R\abs{r_\tau(t)}^2\ge m.
\]
Solve \eqref{eq:local-ledger} for \(W_{\tau,R}\), discard the nonnegative
\(D_{\tau,R}\), and apply Lemma~\ref{lem:cutoff-flux}.
\end{proof}

\begin{proof}[Proof of Theorem~\ref{thm:localization}]
By assumption, \(A(\tau)\to0\); also
\(\sqrt{\nu\delta_\tau}\to0\) as \(\tau\uparrow T_*\).
Given \(R_n\downarrow0\), choose \(\varepsilon_n\downarrow0\). After \(R_n\)
has been fixed, discard finitely many terms so that
\(2R_n<\operatorname{inj}(\Omega)\), and choose
\(\tau_n\uparrow T_*\) so late that
\begin{equation}
 A(\tau_n)\le\varepsilon_nR_n,
 \qquad
 \sqrt{\nu\delta_{\tau_n}}\le\varepsilon_nR_n.
 \label{eq:tau-choice}
\end{equation}
This proves \eqref{eq:mesoscopic-schedule}. Keeping \(\tau_n\) fixed, use
\(\Aa_a[r_{\tau_n}]\ge m\) to select recursively \(t_n\uparrow T_*\), with
\(t_n>\max\{t_{n-1},\tau_n,T_*-1/n\}\) (where \(t_0=t_b\)), such that
\begin{equation}
 \int\chi_{R_n}\abs{r_{\tau_n}(t_n)}^2\dd x
 \ge m-\varepsilon_n.
 \label{eq:tn-choice}
\end{equation}
Equations \eqref{eq:local-ledger}, \eqref{eq:direct-flux},
\eqref{eq:tau-choice}, and \eqref{eq:tn-choice} give
\[
 W_{\tau_n,R_n}(t_n)
 \ge\frac{m-\varepsilon_n}{2}
 -C_{\Omega,\chi}\norm{f}_2^2(\varepsilon_n+\varepsilon_n^2).
\]
Taking the lower limit proves \eqref{eq:localized-work-main}. Since
\(0\le\chi_{R_n}\le1\) and its support lies in \(B_{2R_n}(a)\),
\eqref{eq:positive-local-work-main} follows.

For the pressure consequence, integrate by parts:
\begin{equation}
 W_{\tau,R}(t)
 =-\int_\tau^t\int
 \pi_w\,\nabla\cdot(\chi_Rg_\tau)\dd x\dd\rho.
 \label{eq:local-pressure-duality}
\end{equation}
For every time,
\(\int_\Omega\nabla\cdot(\chi_Rg_\tau)\dd x=0\), so an arbitrary measurable
function \(\beta(\rho)\) may be subtracted from \(\pi_w\). Moreover,
\begin{align}
 \norm{\nabla\cdot(\chi_Rg_\tau)}_{L^2((\tau,T_*)\times\Omega)}
 &\le C_\chi
 \left(
 \norm{\nabla g_\tau}_{L^2_{t,x}}
 +R^{-1}\norm{g_\tau}_{L^2_{t,x}}
 \right)\notag\\
 &\le C_{\Omega,\chi}F_\tau
 \left(\nu^{-1/2}+\frac{\delta_\tau^{1/2}}{R}\right).
 \label{eq:div-cutoff-g}
\end{align}
The pointwise minimizer in \eqref{eq:osc-def} is the spatial mean of
\(\pi_w(t)\) on the ball. If the oscillation in
\eqref{eq:pressure-osc-main} is infinite, the desired inequality is
automatic. Otherwise, subtract this mean in
\eqref{eq:local-pressure-duality}. For all sufficiently large \(n\),
\eqref{eq:localized-work-main}, Cauchy--Schwarz, and
\eqref{eq:div-cutoff-g} give
\[
 W_{\tau_n,R_n}(t_n)
 \le C_{\Omega,\chi}
 \operatorname{Osc}_2\!\left(
 \pi_w;(\tau_n,T_*)\times B_{2R_n}(a)\right)
 F_{\tau_n}
 \left(\nu^{-1/2}+\frac{\delta_{\tau_n}^{1/2}}{R_n}\right).
\]
The denominator here satisfies, again for all sufficiently large \(n\),
\[
 F_{\tau_n}\left(
 \nu^{-1/2}+\frac{\delta_{\tau_n}^{1/2}}{R_n}\right)
 =\nu^{-1/2}F_{\tau_n}
 \left(1+\frac{\sqrt{\nu\delta_{\tau_n}}}{R_n}\right)
 \le 2\nu^{-1/2}\norm{f}_2
\]
by \eqref{eq:mesoscopic-schedule} and \(F_{\tau_n}\le\norm{f}_2\).
Taking the lower limit and using \eqref{eq:localized-work-main} proves
\eqref{eq:pressure-osc-main}.

For each \(n\), if the gradient integral below is infinite, the corresponding
estimate is automatic. Otherwise, the ball Poincar\'e inequality gives
\[
 \inf_{\beta\in\R}
 \norm{\pi_w(t)-\beta}_{L^2(B_{2R_n}(a))}
 \le C_\Omega R_n
 \norm{\nabla\pi_w}_{L^2(B_{2R_n}(a))}.
\]
Square, integrate in time, and use \eqref{eq:pressure-osc-main} to prove
\eqref{eq:pressure-gradient-main}.
\end{proof}

\begin{remark}[The diffusion-scale boundary]
For one fixed restart, the error in \eqref{eq:one-restart-local} need not
remain small as \(R\downarrow0\). At
\(R\sim\sqrt{\nu(T_*-\tau)}\), the diffusion-cutoff flux can already be of
order \(F_\tau^2\); an analogous obstruction occurs when the effective drift
length is comparable to \(R\). The local identity cannot then decide whether
energy was produced by interior pressure work or transported through the
cutoff boundary. Theorem~\ref{thm:localization} is therefore genuinely
mesoscopic rather than an unrestricted fixed-restart localization.
\end{remark}

%% file: sections/application.tex
\section{The Navier--Stokes specialization}
\label{sec:application}

We now prove Theorem~\ref{thm:NS-pressure}. The key specialization is that the
Navier--Stokes velocity is itself a constrained Oseen solution driven by the
same velocity. Thus, for every fixed
\(s\in(t_b,T_*)\),
\begin{equation}
 w(t)=U(t,s)u(s)=u(t),
 \qquad \nabla\pi_w=\nabla p
 \label{eq:w-equals-u}
\end{equation}
up to the irrelevant time-dependent pressure gauge. Indeed, \(u\) solves
\eqref{eq:Oseen} with datum \(u(s)\), and the energy estimate gives uniqueness
for that drift equation. No auxiliary solution or external reduction theorem
is required.

The endpoint measure in \eqref{eq:NS-endpoint-atom} gives the hypothesis of
Theorem~\ref{thm:main}. Indeed, for every sufficiently small \(\rho>0\),
the Portmanteau theorem yields
\begin{equation}
 \limsup_{t\uparrow T_*}
 \int_{B_\rho(a)}\abs{u(t,x)}^2\dd x
 \ge
 \liminf_{k\to\infty}
 \int_{B_\rho(a)}\abs{u(t_k,x)}^2\dd x
 \ge \mu_*(B_\rho(a)).
 \label{eq:portmanteau-atom}
\end{equation}
Since \(\mu_*\) is finite and \(B_\rho(a)\downarrow\{a\}\), continuity from
above and then \(\rho\downarrow0\) give
\begin{equation}
 \Aa_a[u]\ge\mu_*(\{a\})=m.
 \label{eq:NS-atomic-functional}
\end{equation}
Given a fixed \(\tau\in(t_b,T_*)\), choose any \(s\in(t_b,\tau)\).
Equations \eqref{eq:NS-main-ledger} and \eqref{eq:NS-work-lower} now follow
from Theorem~\ref{thm:main} with \(f=u(s)\), \(w=u\), and
\(\nabla\pi_w=\nabla p\).
The same specialization also gives, for every fixed restart,
\begin{equation}
 \int_\tau^{T_*}\norm{u(t)}_\infty
 \norm{\nabla g_\tau(t)}_2\dd t
 \ge \frac{m}{2\norm{u(\tau)}_2},
 \label{eq:NS-product-lower}
\end{equation}
while
\begin{equation}
 r_\tau,g_\tau,c_\tau\longrightarrow0
 \quad\text{in}\quad
 L_t^\infty\dot H_x^{-1}\cap L_t^4L_x^2
 \quad(\tau\uparrow T_*).
 \label{eq:NS-weak-tail}
\end{equation}

Fix \(s\in(t_b,T_*)\). Theorem~\ref{thm:localization} supplies the
quantitative local form. Given any \(R_n\downarrow0\), there are ordered choices
\(\tau_n<t_n<T_*\), with \(\tau_n,t_n\uparrow T_*\), such that
\begin{equation}
 \frac{A(\tau_n)}{R_n}
 +\frac{\sqrt{\nu(T_*-\tau_n)}}{R_n}\longrightarrow0
 \label{eq:NS-mesoscopic-schedule}
\end{equation}
and
\begin{equation}
 \liminf_{n\to\infty}
 \int_{\tau_n}^{t_n}\int
 \chi_{R_n}\nabla p\cdot g_{\tau_n}\dd x\dd t
 \ge\frac m2.
 \label{eq:NS-local-work}
\end{equation}
Along the same cylinders,
\begin{equation}
 \liminf_{n\to\infty}
 \operatorname{Osc}_2\!\left(
 p;(\tau_n,T_*)\times B_{2R_n}(a)\right)
 \ge c_{\Omega,\chi}
 \frac{m\sqrt\nu}{\norm{u(s)}_2}
 \label{eq:NS-local-pressure-osc}
\end{equation}
and
\begin{equation}
 \liminf_{n\to\infty}
 R_n^2\int_{\tau_n}^{T_*}\int_{B_{2R_n}(a)}
 \abs{\nabla p}^2\dd x\dd t
 \ge c_{\Omega,\chi}
 \frac{m^2\nu}{\norm{u(s)}_2^2}.
 \label{eq:NS-local-pressure-gradient}
\end{equation}
Fixing the standard cutoff \(\chi\) once and absorbing it into the constant
gives the constant \(c_\Omega\) stated in Theorem~\ref{thm:NS-pressure}.

It remains to derive the qualitative non-integrability statements. The
measurable pointwise spatial mean realizes the infimum in
\eqref{eq:osc-def}; hence finiteness of \(\operatorname{Osc}_2\) is equivalent
to square integrability after subtracting this time-dependent mean. Suppose
first that for some \(t_0\in(t_b,T_*)\), some \(r>0\), and some measurable
\(\beta:(t_0,T_*)\to\R\),
\begin{equation}
 p-\beta(t)\in L^2((t_0,T_*)\times B_r(a)).
 \label{eq:local-pressure-L2-assumption}
\end{equation}
Apply the quantitative result with \(s=t_0\), and choose any sequence
\(R_n\downarrow0\) with \(2R_n<r\). By absolute continuity of the integral,
the \(L^2\) norm of \(p-\beta(t)\) on
\((\tau_n,T_*)\times B_{2R_n}(a)\) tends to zero. Since the oscillation is
the infimum over all time-dependent gauges, this contradicts
\eqref{eq:NS-local-pressure-osc}. Hence
\eqref{eq:NS-pressure-not-L2} holds.

If instead \(\nabla p\in L^2((t_0,T_*)\times B_r(a))\), then the integrals
over the shrinking cylinders tend to zero, whereas
\eqref{eq:NS-local-pressure-gradient} forces them to be at least a positive
constant times \(R_n^{-2}\). This contradiction proves
\eqref{eq:NS-gradient-not-L2} and completes the proof of
Theorem~\ref{thm:NS-pressure}.

%% file: sections/hardy.tex
\section{Hardy--BMO form of the pressure work}
\label{sec:hardy}

Because every Hardy probe below has zero spatial mean, we use the mean-zero
part of the periodic real Hardy space \(\Hh^1(\T^3)\), together with periodic
BMO and VMO modulo constants. The corresponding duality and preduality follow
from the theory on complete spaces of homogeneous type; see
\cite[Theorem~B and Sec.~4, Theorem~4.1, pp.~638--639]{CoifmanWeiss1977},
with periodic VMO understood as the BMO-norm closure of smooth periodic
functions modulo constants. We shall use the
periodic Coifman--Rochberg--Weiss commutator estimate
\begin{equation}
 \norm{[R_j,b]f}_2
 \le C_\Omega\norm{b}_{\BMO}\norm{f}_2,
 \label{eq:periodic-CRW}
\end{equation}
where \([R_j,b]f=R_j(bf)-bR_jf\),
which follows from the Calder\'on--Zygmund commutator theorem on spaces of
homogeneous type \cite{AndersonDamian2022}, since the periodic Riesz
transforms are Calder\'on--Zygmund operators on the flat torus; the original Euclidean theorem is
\cite{CoifmanRochbergWeiss1976}. The argument below is the commutator proof
of the periodic CLMS div--curl estimate; compare the Euclidean theorem in
\cite{CoifmanLionsMeyerSemmes1993}. The resulting estimate is also stated
directly for periodic flows by Lions
\cite[Sec.~3.2, pp.~92--93, Eq.~(3.25)]{Lions1996Fluid}.

\begin{proposition}[Hardy-space estimate for the pressure-work integrand]
\label{prop:Hardy}
For almost every \(t\in(\tau,T_*)\), define
\begin{equation}
 \mathfrak h_{\tau,i}(t,x)
 =w_j(t,x)\partial_j(g_\tau)_i(t,x).
 \label{eq:Hardy-probe}
\end{equation}
Then \(\mathfrak h_\tau(t)\in\Hh^1(\Omega;\R^3)\),
\eqref{eq:Hardy-bounds-main} holds, and
\begin{equation}
 \int_\Omega\mathfrak h_{\tau,i}(t,x)\dd x=0
 \qquad\text{for each }i.
 \label{eq:Hardy-zero-mean}
\end{equation}
\end{proposition}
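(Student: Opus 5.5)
The plan is to write each component of \(\mathfrak h_\tau\) as a div--curl product and then apply the periodic CLMS estimate, proved through the commutator bound \eqref{eq:periodic-CRW} and \(\Hh^1\)--\(\BMO\) duality.

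\textbf{Div--curl form.} Since \(g_\tau=\Qq z_\tau\) is potential and mean-free, we can write \(g_\tau=\nabla\phi_\tau\) with \(\phi_\tau=-(-\Delta)^{-1}\nabla\cdot z_\tau\). Then
\[
\mathfrak h_{\tau,i}=w\cdot\nabla(\partial_i\phi_\tau),
\]
where \(\nabla\cdot w=0\) and \(\nabla(\partial_i\phi_\tau)\) is curl-free. For almost every \(t\), both factors lie in \(L^2\): \(\norm{w(t)}_2\le F_\tau\), and \(\nabla g_\tau(t)\in L^2\) by \eqref{eq:g-gradient}.

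\textbf{Zero mean.} Using \(\nabla\cdot w=0\),
\[
\mathfrak h_{\tau,i}=\nabla\cdot\bigl(w\,(g_\tau)_i\bigr),
\]
and this is integrable since \(w\in L^2\) and \((g_\tau)_i\in H^1\). Integrating over the torus gives \eqref{eq:Hardy-zero-mean}.

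\textbf{Hardy membership and the pointwise bound.} We argue by duality. For smooth periodic \(b\), integrate by parts and write \(\partial_j\partial_i\phi_\tau=-R_jR_i\Delta\phi_\tau\)-type expressions, so that
\[
\int b\,\mathfrak h_{\tau,i}=\int b\,w_j\,\partial_j(g_\tau)_i .
\]
Express \(\partial_j(g_\tau)_i=R_j\Theta_i\), where \(\Theta_i=\partial_j\)-adjusted potentials: concretely \(\partial_j\partial_i\phi=-R_jR_i(-\Delta)\phi\), with \((-\Delta)^{1/2}\)-level quantity \(\Theta\in L^2\) and \(\norm{\Theta}_2\le C\norm{\nabla g_\tau}_2\). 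Divergence-freeness gives \(\sum_j R_j w_j=0\), so that
\[
\int b\,w_jR_j\Theta=-\int\Theta\,R_j(bw_j)=-\int\Theta\,[R_j,b]w_j,
\]
using that \(R_j\) is skew-adjoint. Then \eqref{eq:periodic-CRW} gives
\[
\Bigl|\int b\,\mathfrak h_{\tau,i}\Bigr|\le C\norm{b}_{\BMO}\norm{w}_2\norm{\nabla g_\tau}_2 .
\]
By Coifman--Weiss duality (\(\Hh^1\) is the dual of VMO, and smooth functions are dense there), this places \(\mathfrak h_\tau(t)\) in \(\Hh^1\) with the first bound in \eqref{eq:Hardy-bounds-main}. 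Measurability in time follows by approximation with smooth fields on preterminal intervals.

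\textbf{Time-integrated bound.} Squaring the pointwise estimate and integrating over \((\tau,T_*)\), we combine \(\norm{w}_2\le F_\tau\) with \eqref{eq:g-gradient} to get
\[
\norm{\mathfrak h_\tau}_{L^2\Hh^1}\le C F_\tau\cdot F_\tau(2\nu)^{-1/2}.
\]
The pairing representation \eqref{eq:Hardy-pairing-main} is the same duality applied with \(b=u_i\in L^\infty\subset\BMO\). The pairing coincides with the Lebesgue integral, since \(u\mathfrak h_\tau\in L^1\) on smooth preterminal times.

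\textbf{Main obstacle.} The delicate step is the bookkeeping of the commutator identity on the torus: zero modes must be handled (\(R_j\) is defined only on nonzero modes, and \(\Pp\) acts as the identity on constants), and the periodic VMO/\(\BMO\) duality must be used correctly. We avoid the zero-mode issue by working throughout with mean-zero \(w\) components after subtracting the constant mean of \(w\). The constant part \(\bar w\cdot\nabla(g_\tau)_i\) is itself a derivative with zero mean, and it is bounded by \(\abs{\bar w}\norm{\nabla g_\tau}_2\) through \(\nabla\cdot(\bar w(g_\tau)_i)\) in \(\Hh^1\), which in turn is controlled by \(\norm{\nabla g_\tau}_2\) via \(H^1\) Riesz bounds.
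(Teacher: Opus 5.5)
Your proposal is correct and is essentially the paper's proof: you write $\partial_j(g_\tau)_i=R_jh$ with $h=(-\Delta)^{1/2}(g_\tau)_i$ and use skew-adjointness of $R_j$ together with $\sum_jR_jw_j=0$ to turn the test integral into $-\int h\sum_j[R_j,b]w_j$; then \eqref{eq:periodic-CRW} and VMO--$\Hh^1$ duality give the bound, with zero mean coming from $\nabla\cdot w=0$ and the $L^2_t\Hh^1$ estimate from the energy bounds. Two cosmetic points: the identity should read $\partial_j\partial_i\phi=R_jR_i(-\Delta)\phi$, without the minus sign (this is harmless since only $\norm{h}_2=\norm{\nabla(g_\tau)_i}_2$ is used), and subtracting $\bar w$ is unnecessary, because $R_j$ already annihilates the zero mode, so $\sum_jR_jw_j=0$ and the commutator bound hold directly for the full $w$ with the factor $\norm{w}_2$.
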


\begin{proof}
Fix a component \(i\) and a time for which both factors lie in \(L^2\).
Put
\[
 h=(-\Delta)^{1/2}\bigl((g_\tau)_i-\langle(g_\tau)_i\rangle\bigr),
 \qquad
 \partial_j(g_\tau)_i=R_jh.
\]
For every smooth periodic \(b\), skew-adjointness of \(R_j\) and
\(\sum_jR_jw_j=(-\Delta)^{-1/2}\nabla\cdot w=0\) give
\begin{align*}
 \int_\Omega b\,w_j\partial_j(g_\tau)_i\dd x
 &=-\int_\Omega h\sum_j R_j(bw_j)\dd x\\
 &=-\int_\Omega h\sum_j[R_j,b]w_j\dd x.
\end{align*}
Therefore \eqref{eq:periodic-CRW} and
\(\norm{h}_2=\norm{\nabla(g_\tau)_i}_2\) imply
\[
 \left|\int_\Omega b\,w\cdot\nabla(g_\tau)_i\dd x\right|
 \le C_\Omega\norm{b}_{\BMO}\norm{w}_2
 \norm{\nabla(g_\tau)_i}_2.
\]
The product has zero spatial mean by periodic integration by parts. By the
definition of periodic VMO above, the estimate extends from smooth tests to
VMO. Because the product has mean zero, periodic VMO--Hardy
preduality---equivalently, the Hardy norm characterization by smooth BMO
tests modulo constants---then gives
\begin{equation}
 \norm{w\cdot\nabla(g_\tau)_i}_{\Hh^1}
 \le C_\Omega\norm{w}_2\norm{\nabla(g_\tau)_i}_2.
 \label{eq:CLMS-bound}
\end{equation}
Summing over \(i\) proves the pointwise bound in
\eqref{eq:Hardy-bounds-main}. The Oseen energy estimate and
\eqref{eq:g-gradient} give
\[
 \norm{\mathfrak h_\tau}_{L^2_t\Hh^1_x}
 \le C_\Omega
 \sup_{t\ge\tau}\norm{w(t)}_2
 \norm{\nabla g_\tau}_{L^2_tL^2_x}
 \le C_\Omega\nu^{-1/2}F_\tau^2.
\]
Finally, periodic integration by parts and \(\nabla\cdot w=0\) give
\[
 \int_\Omega w\cdot\nabla(g_\tau)_i\dd x=0,
\]
which proves \eqref{eq:Hardy-zero-mean}.
\end{proof}

\begin{corollary}[Oscillation-only work]
\label{cor:BMO-work}
The relative pressure-work identity can be written as
\begin{equation}
 \frac12\norm{r_\tau(t)}_2^2
 +\nu\int_\tau^t\norm{\nabla r_\tau}_2^2\dd\rho
 =\int_\tau^t
 \ip{u}{\mathfrak h_\tau}_{\BMO,\Hh^1}\dd\rho.
 \label{eq:BMO-ledger}
\end{equation}
If \(\Aa_a[w]\ge m\), then
\begin{equation}
 \limsup_{t\uparrow T_*}
 \int_\tau^t
 \ip{u}{\mathfrak h_\tau}_{\BMO,\Hh^1}\dd\rho
 \ge\frac m2.
 \label{eq:BMO-tax}
\end{equation}
The right-hand pairing is unchanged when its BMO representative is altered
by a spatially constant velocity.
\end{corollary}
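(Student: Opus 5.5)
The plan is to identify the pressure-work integrand with the Hardy--BMO pairing of Proposition~\ref{prop:Hardy}, and then read \eqref{eq:BMO-tax} off from Proposition~\ref{prop:work-tax}.

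\textbf{The identity \eqref{eq:BMO-ledger}.} Start from the last line of \eqref{eq:main-ledger}, established in Theorem~\ref{thm:relative-ledger}. Because \(g_\tau\) is a gradient field, \(\partial_j(g_\tau)_i=\partial_i(g_\tau)_j\), so for almost every time
\[
 \int_\Omega u_iw_j\partial_j(g_\tau)_i\dd x=\sum_i\int_\Omega u_i\,\mathfrak h_{\tau,i}\dd x .
\]
By Proposition~\ref{prop:Hardy}, each \(\mathfrak h_{\tau,i}(t)\) lies in the mean-zero part of \(\Hh^1\). On each compact preterminal interval the drift \(u(t)\) is smooth, hence bounded and in \(\BMO\). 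For a bounded function, the Hardy--BMO duality pairing coincides with the Lebesgue integral against an \(\Hh^1\) function. This is the main point needing care. I would justify it by noting that \(u(t)\) is a smooth periodic function, so it lies in VMO. There the VMO--\(\Hh^1\) pairing is defined by the absolutely convergent integral, and it extends continuously to the BMO pairing. With this identification, integrating in time reproduces \eqref{eq:BMO-ledger} from \eqref{eq:main-ledger}. The time integral is finite, since the integrand is bounded by \(\norm{u}_\infty\norm{w}_2\norm{\nabla g_\tau}_2\), which is integrable on \([\tau,t]\) for \(t<T_*\).

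\textbf{The lower bound \eqref{eq:BMO-tax}.} This is immediate. The right side of \eqref{eq:BMO-ledger} equals \(\int_\tau^t\ip{\nabla\pi_w}{g_\tau}\dd\rho\), so \eqref{eq:main-work-lower} from Proposition~\ref{prop:work-tax} transfers verbatim.

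\textbf{Invariance under constant velocities.} Replace \(u\) by \(u+V\) with \(V\in\R^3\) constant. The pairing changes by \(\sum_i V_i\int_\Omega\mathfrak h_{\tau,i}\dd x\), which vanishes by \eqref{eq:Hardy-zero-mean}. This is consistent with \(\BMO\) being taken modulo constants, so the pairing is well defined on equivalence classes.

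I expect no real obstacle beyond checking that the duality pairing agrees with the integral for the smooth representative \(u(t)\).
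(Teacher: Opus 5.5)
Your proposal is correct and follows the paper's proof. The identity \eqref{eq:BMO-ledger} is the last line of \eqref{eq:main-ledger} read as a Hardy--BMO pairing, the lower bound \eqref{eq:BMO-tax} transfers from Proposition~\ref{prop:work-tax}, and invariance under constant velocities follows from \eqref{eq:Hardy-zero-mean}. The appeal to \(\partial_j(g_\tau)_i=\partial_i(g_\tau)_j\) is unnecessary, because \(u_iw_j\partial_j(g_\tau)_i=u_i\mathfrak h_{\tau,i}\) holds directly by the definition of \(\mathfrak h_\tau\). Your extra check that the pairing reduces to the Lebesgue integral for the smooth preterminal drift is a detail the paper leaves implicit.
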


\begin{proof}
The identity follows from the last representation in
Theorem~\ref{thm:relative-ledger} and Hardy--BMO duality. The lower bound is
Proposition~\ref{prop:work-tax}. If \(b(t)\in\R^3\) is spatially constant,
then \eqref{eq:Hardy-zero-mean} gives
\(\ip{b(t)}{\mathfrak h_\tau(t)}=0\). Thus the pairing sees only the BMO
class of the velocity modulo constants.
\end{proof}

\begin{remark}
The positive accumulated identities obtained from different restart times
are neither independent nor automatically summable; their precise overlap
is recorded in Proposition~\ref{prop:restart-relation}.
\end{remark}

%% file: sections/sharpness.tex
\section{Sharpness of the weak-tail estimates and restart interaction}
\label{sec:sharpness}

The Navier--Stokes drift tail has finite, hence vanishing, \(L_t^1\BMO_x\)
mass; see Appendix~\ref{sec:NS-tail}. Proposition~\ref{prop:Hardy} supplies
a restart-uniform \(L_t^2\Hh_x^1\) bound. These statements alone do not make
the pairing in \eqref{eq:BMO-tax} vanish.

\begin{proposition}[A div--curl concentration example]
\label{prop:sharpness}
There exist a divergence-free field
\(V\in C^\infty(\T^3;\R^3)\), a divergence-free field
\(Y\in C^\infty(\T^3;\R^3)\), a gradient field
\(G\in C^\infty(\T^3;\R^3)\), a scalar \(\eta\in L^1(0,1)\), and, for every
\(\tau\in(0,1)\), a smooth \(b_\tau\) supported in \((\tau,1)\) such that
\begin{equation}
 \norm{b_\tau}_2\le1,
 \qquad
 \left(\int_\tau^1 \eta(t)b_\tau(t)\dd t\right)
 \left(\int_{\T^3}V\cdot[(Y\cdot\nabla)G]\dd x\right)=1.
 \label{eq:sharp-pairing}
\end{equation}
Consequently, \(u(t,x)=\eta(t)V(x)\) has vanishing
\(L^1((\tau,1);\BMO)\) tails, the fields
\begin{equation}
 \mathfrak k_\tau(t,x)=b_\tau(t)(Y\cdot\nabla)G(x)
 \label{eq:k-sharp}
\end{equation}
are uniformly bounded in \(L^2((\tau,1);\Hh^1)\), and yet their
Hardy--BMO pairings equal one for every \(\tau\).
\end{proposition}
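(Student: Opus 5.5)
The example is separable in space and time. All the concentration is put into the time profile, so the spatial div--curl pairing only has to be nonzero once.

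\emph{Spatial fields.} Take $G=\nabla\phi$ with $\phi(x)=\cos x_1$, so $G=(-\sin x_1,0,0)$. Take $Y=(0,\sin x_1,0)$, which is divergence-free since it depends only on $x_1$, and $V=(0,\cos x_1,0)$, which is likewise divergence-free. Then $(Y\cdot\nabla)G=Y_2\partial_2G=0$, so this first choice fails and we adjust it. Keep $Y$ depending on $x_1$ and take $G$ depending on the transverse variable: with $\phi=\cos x_2$, $G=(0,-\sin x_2,0)$ and
\[
(Y\cdot\nabla)G=Y_2\,\partial_2 G=(0,-\sin x_1\cos x_2,0).
\]
Now pick $V=(0,\sin x_1\cos x_2,0)$. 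Its divergence is $\partial_2 V_2=-\sin x_1\sin x_2\neq0$, so replace it by
\[
V=(\sin x_1\cos x_2,\,-\cos x_1\sin x_2,\,0),
\]
whose divergence is $\cos x_1\cos x_2-\cos x_1\cos x_2=0$. Then
\[
\int_{\T^3} V\cdot(Y\cdot\nabla)G\,\dd x=\int_{\T^3}\cos x_1\sin x_2\,\sin x_1\cos x_2\,\dd x=0,
\]
so this choice also fails. The way out is to allow general trigonometric polynomials. The trilinear form
\[
(V,Y,G)\mapsto\int_{\T^3} V\cdot(Y\cdot\nabla)G\,\dd x
\]
restricted to smooth divergence-free $V,Y$ and gradient $G$ is not identically zero: it is exactly the pairing $\ip{\nabla\pi}{g}$ appearing in \eqref{eq:main-ledger}, and $\Qq[(Y\cdot\nabla)V]\neq0$ for generic solenoidal $Y,V$. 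So I will choose, for example, $Y=V$ a shear-type field such as $V=Y=(\sin x_2,\sin x_3,0)$, compute the potential part $\Qq[(V\cdot\nabla)V]$, let $G$ be that gradient, and use $\int V\cdot(V\cdot\nabla)G=-\int G\cdot(V\cdot\nabla)V=-\norm{G}_2^2\neq0$. The first equality is integration by parts using $\nabla\cdot V=0$. Finally rescale $G$ so that the spatial pairing equals some constant $\kappa\neq0$.

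\emph{Time profiles.} Set $\eta(t)=(1-t)^{-1/2}$, which lies in $L^1(0,1)$ but not in $L^2$ near $t=1$. For each $\tau$, choose a smooth $b_\tau$ supported in $(\tau,1)$ with $\norm{b_\tau}_{L^2}\le1$ and $\int_\tau^1\eta b_\tau\,\dd t=1/\kappa$. This is possible because
\[
\sup\Bigl\{\int_\tau^1\eta b\,\dd t:\ \norm{b}_2\le1\Bigr\}=\norm{\eta}_{L^2(\tau,1)}=\infty;
\]
take smooth truncations of $\eta\mathbf 1_{(\tau,1)}$, normalize them, and scale down to hit the value exactly. This gives \eqref{eq:sharp-pairing}.

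\emph{Consequences.} The tail satisfies $\norm{u}_{L^1((\tau,1);\BMO)}=\norm{V}_{\BMO}\int_\tau^1\eta\,\dd t\to0$. Next, $(Y\cdot\nabla)G$ is a smooth zero-mean div--curl product, hence lies in $\Hh^1$ by Proposition~\ref{prop:Hardy}-type bounds (or trivially, since it is smooth and mean zero). Therefore $\norm{\mathfrak k_\tau}_{L^2\Hh^1}\le\norm{(Y\cdot\nabla)G}_{\Hh^1}$ uniformly in $\tau$. The space--time pairing factorizes into the product in \eqref{eq:sharp-pairing}, which equals one. The only real obstacle is the nondegeneracy of the spatial trilinear form, and the identity $-\norm{\Qq[(V\cdot\nabla)V]}_2^2$ settles it once a field with $\Qq[(V\cdot\nabla)V]\neq0$ is exhibited. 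For $V=(\sin x_2,\sin x_3,0)$, $(V\cdot\nabla)V=(\sin x_3\cos x_2,0,0)$, and its divergence vanishes, so its gradient part is zero and this is not yet a valid example. I would therefore try $V=(\sin x_2,\sin x_1,0)$, for which
\[
(V\cdot\nabla)V=(\sin x_1\cos x_2,\,\sin x_2\cos x_1,\,0)=\tfrac12\nabla\bigl(-\cos(x_1+x_2)-\cos(x_1-x_2)\bigr)\cdot(-1),
\]
a nonzero pure gradient. Hence $G=\Qq[(V\cdot\nabla)V]\neq0$ works.
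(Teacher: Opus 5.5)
Your final construction is correct. It follows the paper's overall scheme: a separable example, $\eta\in L^1(0,1)\setminus L^2(\tau,1)$, and $b_\tau$ obtained from the unboundedness of $b\mapsto\int_\tau^1\eta b$ on the $L^2$ unit ball. It differs only in how you make the spatial pairing nonzero.

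\textbf{Spatial nondegeneracy.} The paper writes down an unequal triple: $G=\nabla\sin x_1=(\cos x_1,0,0)$, $Y=(\cos x_2,0,0)$, $V=(-\sin x_1\cos x_2,\cos x_1\sin x_2,0)$. For these, $V\cdot(Y\cdot\nabla)G=\sin^2x_1\cos^2x_2$ pointwise, so positivity is read off directly. You instead take $Y=V=(\sin x_2,\sin x_1,0)$ and $G=\Qq[(V\cdot\nabla)V]$, and rely on the identity $\int V\cdot(V\cdot\nabla)G\,dx=-\norm{\Qq[(V\cdot\nabla)V]}_2^2$.

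\textbf{What each route gives.} The paper's choice is a one-line computation with no search involved. Yours is more structural: it shows the spatial pairing is nonzero exactly when the profile has a nontrivial pressure part $\Qq[(V\cdot\nabla)V]$. It also builds the example with the advecting field equal to the drift profile, which mirrors the Navier--Stokes specialization $w=u$. The price is the trial and error you went through and an explicit check that $\Qq[(V\cdot\nabla)V]\neq0$.

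\textbf{Time profile and Hardy bound.} Your borderline exponent $\eta=(1-t)^{-1/2}$ works just as well as the paper's $\alpha\in(1/2,1)$, since $(1-t)^{-1}$ is not integrable at $t=1$. For the $\Hh^1$ bound, your remark that a smooth mean-zero function lies in $\Hh^1$ is enough; the paper invokes CLMS there.

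\textbf{Two small repairs.}
\begin{itemize}
\item In fact $(V\cdot\nabla)V=\nabla(-\cos x_1\cos x_2)$, so your displayed potential has the wrong sign. This is harmless.
\item Your spatial constant is $\kappa=-\norm{G}_2^2=-4\pi^3<0$. Scaling a positive $b$ down by a factor in $(0,1]$ cannot reach $1/\kappa$. Replace $G$ by $-G$, or $b_\tau$ by $-b_\tau$, first.
\end{itemize}

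In the write-up, drop the failed attempts and present only the final triple.
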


\begin{proof}
Set
\[
 \phi(x)=\sin x_1,\qquad
 G=\nabla\phi=(\cos x_1,0,0),\qquad
 Y=(\cos x_2,0,0).
\]
Then \(\nabla\cdot Y=0\), \(G\) is a gradient, and
\[
 K:=(Y\cdot\nabla)G=(-\sin x_1\cos x_2,0,0).
\]
Take
\[
 V=(-\sin x_1\cos x_2,\ \cos x_1\sin x_2,0).
\]
It is divergence free and
\begin{equation}
 D:=\int_{\T^3}V\cdot K\dd x
 =\int_{\T^3}\sin^2x_1\cos^2x_2\dd x>0.
 \label{eq:D-positive}
\end{equation}

Choose \(\alpha\in(1/2,1)\) and \(\eta(t)=(1-t)^{-\alpha}\). Then
\(\eta\in L^1(0,1)\), but \(\eta\notin L^2(\tau,1)\) for every
\(\tau\in(0,1)\).
Therefore the linear functional
\(\psi\mapsto\int_\tau^1\eta\psi\) is unbounded on the \(L^2\) unit ball of
\(C_c^\infty(\tau,1)\). Select \(\psi_\tau\) in that unit ball with
\(\int \eta\psi_\tau\ge D^{-1}\), and rescale it by a factor in \((0,1]\) to
obtain \(b_\tau\) satisfying
\(\int \eta b_\tau=D^{-1}\). This proves \eqref{eq:sharp-pairing}.

The CLMS estimate applied to the fixed spatial div--curl product \(K\)
gives
\[
 \norm{\mathfrak k_\tau}_{L_t^2\Hh_x^1}
 =\norm{b_\tau}_2\norm{K}_{\Hh^1}
 \le\norm{K}_{\Hh^1}.
\]
The remaining statements follow from \(\eta\in L^1\) and
\eqref{eq:sharp-pairing}.
\end{proof}

\begin{remark}
Proposition~\ref{prop:sharpness} is not a Navier--Stokes counterexample and
does not claim that the constructed \(\mathfrak k_\tau\) comes from a restart
equation. It proves the exact functional-analytic limitation needed here:
separated \(L_t^1\BMO_x\) and \(L_t^2\Hh_x^1\) estimates, even with the
correct spatial div--curl structure, cannot by themselves close
\eqref{eq:minimal-gate}.
\end{remark}

\begin{proposition}[Cross-restart identity]
\label{prop:restart-relation}
For \(s<\sigma<\tau<t<T_*\),
\begin{equation}
 r_\sigma(t)=r_\tau(t)+S(t,\tau)r_\sigma(\tau).
 \label{eq:cross-restart}
\end{equation}
In general, the two terms on the right are not orthogonal in \(L^2\).
\end{proposition}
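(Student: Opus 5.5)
The plan is to exploit the linearity of the componentwise passive propagator together with the definition of the restart responses. By \eqref{eq:restart-definitions}, with \(w\) the fixed Oseen solution, we have \(r_\sigma=w-S(\cdot,\sigma)w(\sigma)\) and \(r_\tau=w-S(\cdot,\tau)w(\tau)\). The first step is the evolution (cocycle) property \(S(t,\sigma)=S(t,\tau)S(\tau,\sigma)\) for \(\sigma<\tau<t\). It follows from uniqueness of energy solutions of \eqref{eq:passive} with the fixed drift \(u\). That uniqueness in turn comes from the \(L^2\) contraction in Lemma~\ref{lem:Nash} applied to the difference of two solutions.

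Using the cocycle property, write
\[
 z_\sigma(t)=S(t,\tau)z_\sigma(\tau)=S(t,\tau)\bigl(w(\tau)-r_\sigma(\tau)\bigr)
 =z_\tau(t)-S(t,\tau)r_\sigma(\tau),
\]
where the last step uses linearity of \(S(t,\tau)\). Subtracting this from \(w(t)\) gives \(r_\sigma(t)=w(t)-z_\tau(t)+S(t,\tau)r_\sigma(\tau)=r_\tau(t)+S(t,\tau)r_\sigma(\tau)\), which is \eqref{eq:cross-restart}. An equivalent route is to check that both sides solve the relative equation \eqref{eq:r-equation} on \((\tau,t)\) with the same pressure forcing \(\nabla\pi_w\) and the same datum \(r_\sigma(\tau)\) at time \(\tau\), and then invoke uniqueness. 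The direct algebraic route above is cleaner, so I will use that one.

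For the non-orthogonality claim, I will give a concrete mechanism and not merely assert it. The inner product \(\ip{r_\tau(t)}{S(t,\tau)r_\sigma(\tau)}\) vanishes at \(t=\tau\), since \(r_\tau(\tau)=0\). Its time derivative at \(t=\tau^+\) is
\[
 \ip{\partial_t r_\tau(\tau)}{r_\sigma(\tau)}=-\ip{\nabla\pi_w(\tau)}{r_\sigma(\tau)}=\ip{\nabla\pi_w(\tau)}{g_\sigma(\tau)}.
\]
This follows from \eqref{eq:r-equation}, \(r_\tau(\tau)=0\), and \(\Qq r_\sigma=-g_\sigma\). That quantity is exactly the instantaneous pressure-work integrand of Theorem~\ref{thm:relative-ledger} for the restart \(\sigma\), which is generically nonzero.

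To make this rigorous, I will exhibit an example: a smooth explicit drift with \(w\) a nontrivial Oseen solution for which \(\ip{\nabla\pi_w}{\Qq S(\tau,\sigma)w(\sigma)}\ne0\) at some time. A shear or a Taylor--Green-type drift with a low-mode datum should suffice, computed to first order in \(\tau-\sigma\). The main obstacle is this last step, namely producing a verifiable explicit configuration where the pairing is nonzero. Since the statement only says ``in general \ldots\ not orthogonal,'' I would be content with a perturbative small-time computation showing the pairing is of order \((\tau-\sigma)\) with a nonzero coefficient.
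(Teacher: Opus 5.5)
Your proof of \eqref{eq:cross-restart} is the paper's argument: the evolution property \(S(t,\tau)z_\sigma(\tau)=z_\sigma(t)\) combined with linearity of \(S(t,\tau)\). You also justify the evolution property by \(L^2\) uniqueness, which the paper takes for granted.

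You differ from the paper on the non-orthogonality clause. The paper supports it only by noting that both summands can have nonzero solenoidal and potential parts and that no Hodge relation makes them orthogonal. That is a heuristic, since having components in both blocks does not by itself rule out orthogonality. Your computation gives an actual mechanism: for \(\phi(t)=\ip{r_\tau(t)}{S(t,\tau)r_\sigma(\tau)}\) you show \(\phi(\tau)=0\) and \(\phi'(\tau^+)=\ip{\nabla\pi_w(\tau)}{g_\sigma(\tau)}\).

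The step you flag as the main obstacle closes in one line, with no Taylor--Green computation needed. Since \(g_\sigma(\sigma)=0\), the \(\Qq\)-projected passive equation gives \(\partial_tg_\sigma(\sigma)=-\Qq[(u\cdot\nabla)w(\sigma)]=\nabla\pi_w(\sigma)\). Hence \(g_\sigma(\tau)=(\tau-\sigma)\nabla\pi_w(\sigma)+o(\tau-\sigma)\) in \(L^2\), and
\[
 \ip{\nabla\pi_w(\tau)}{g_\sigma(\tau)}
 =(\tau-\sigma)\norm{\nabla\pi_w(\sigma)}_2^2+o(\tau-\sigma).
\]
So any configuration with \(\nabla\pi_w(\sigma)\neq0\) gives non-orthogonality. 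This means \((u\cdot\nabla)w(\sigma)\) is not solenoidal; for example, take a Navier--Stokes flow \(w=u\) whose pressure gradient does not vanish. First fix \(\tau\) close to \(\sigma\) so that this pairing is nonzero, then take \(t\) close to \(\tau\). Both expansions are justified because \(w\) and \(z_\sigma\) are smooth near \(\sigma>s\).

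Your route therefore turns the paper's informal clause into a proof. It also shows that the overlap term has positive inner product with \(r_\tau(t)\) when \(\sigma<\tau<t\) are close together.
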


\begin{proof}
The passive evolution property gives
\(S(t,\tau)z_\sigma(\tau)=z_\sigma(t)\). Since
\(r_\sigma(\tau)=w(\tau)-z_\sigma(\tau)\),
\begin{align*}
 r_\tau(t)+S(t,\tau)r_\sigma(\tau)
 &=w(t)-S(t,\tau)w(\tau)\\
 &\quad+S(t,\tau)w(\tau)-S(t,\tau)z_\sigma(\tau)\\
 &=w(t)-z_\sigma(t)=r_\sigma(t).
\end{align*}
Both summands can have nonzero solenoidal and potential components, and no
Hodge relation makes them orthogonal.
\end{proof}

Equation \eqref{eq:cross-restart} prevents one from summing the left sides of
\eqref{eq:main-ledger} over restart times as independent contributions. Any
multiscale argument must first account for the overlap term
\(S(t,\tau)r_\sigma(\tau)\).

Taken together, Theorem~\ref{thm:NS-pressure} and the relative pressure-work
principle convert endpoint atomic concentration into a local integrability
obstruction for the actual pressure. The same-state comparison removes the
inherited part of the concentration and isolates the response generated by the
incompressibility constraint. Mesoscopic localization then forces a fixed
amount of relative pressure work into shrinking neighborhoods of the atomic
point after an ordered choice of radii, restart times, and terminal times.
Thus local \(L^2\) control of either the pressure modulo time-dependent
constants or its gradient excludes endpoint point atoms, providing a
pressure-estimate route to atom exclusion that is narrower than full
regularity. Endpoint atomic energy therefore cannot remain confined to the
velocity field: the incompressibility constraint forces a non-integrable local
signature in the actual pressure.

%% file: sections/appendices.tex
\section{Navier--Stokes tail integrability}
\label{sec:NS-tail}

We record why assumption \eqref{eq:A-finite} is available for a smooth
three-dimensional Navier--Stokes branch on a finite preterminal interval.
Fix the regular time \(t_b\) from Theorem~\ref{thm:NS-pressure}. For each
\(T\in(t_b,T_*)\), apply the energy inequality and the periodic
Foias--Guillop\'e--Temam estimate at derivative order two to the mean-zero
Galilean transform of \(u\) on \([t_b,T]\). On this transform the Stokes
operator is \(-\Delta\) on mean-zero solenoidal fields, and spatial
derivative norms are unchanged. The energy inequality gives
\begin{equation}
 \int_{t_b}^{T}\norm{\nabla u(t)}_2^2\dd t
 \le E_b:=\frac{\norm{u(t_b)}_2^2}{2\nu},
 \label{eq:energy-tail}
\end{equation}
while the derivative-order \(r=2\) case of the periodic
Foias--Guillop\'e--Temam higher-derivative estimate
\cite{FoiasGuillopeTemam1981} gives
\begin{equation}
 \int_{t_b}^{T}\norm{\Delta u(t)}_2^{2/3}\dd t
 \le C_{\mathrm{FGT}},
 \label{eq:FGT}
\end{equation}
where
\(C_{\mathrm{FGT}}=C_{\mathrm{FGT}}(\nu,\Omega,\norm{u(t_b)}_2,
T_*-t_b)<\infty\) is independent of \(T\). Monotone convergence therefore
permits \(T\uparrow T_*\) in \eqref{eq:energy-tail} and \eqref{eq:FGT}.

On \(\T^3\), Agmon's inequality, with
\(C_{\mathrm A}=C_{\mathrm A}(\Omega)\), and conservation of the spatial mean
imply
\begin{equation}
 \norm{u(t)}_\infty
 \le \abs{\langle u(t_b)\rangle}
 +C_{\mathrm A}\norm{\nabla u(t)}_2^{1/2}
              \norm{\Delta u(t)}_2^{1/2}.
 \label{eq:Agmon}
\end{equation}
H\"older's inequality with exponents \(4\) and \(4/3\) gives, uniformly for
\(T<T_*\),
\begin{align}
 \int_{t_b}^{T}
 \norm{\nabla u}_2^{1/2}\norm{\Delta u}_2^{1/2}\dd t
 &\le
 \left(\int_{t_b}^{T}\norm{\nabla u}_2^2\dd t\right)^{1/4}
 \left(\int_{t_b}^{T}\norm{\Delta u}_2^{2/3}\dd t\right)^{3/4}
 \le E_b^{1/4}C_{\mathrm{FGT}}^{3/4}.
 \label{eq:Agmon-integrated}
\end{align}
Letting \(T\uparrow T_*\), we obtain the explicit bound
\[
 A(t_b)\le (T_*-t_b)\abs{\langle u(t_b)\rangle}
 +C_{\mathrm A}E_b^{1/4}C_{\mathrm{FGT}}^{3/4}<\infty.
\]
Thus \(u\in L^1((t_b,T_*);L^\infty)\), and absolute continuity of the
integral gives \(A(\tau)\to0\).

The same estimates have a critical oscillation form. Interpolation and the
Sobolev--BMO embedding \cite{Strichartz1980} yield
\begin{equation}
 \norm{u-\langle u\rangle}_{\BMO}
 \le C_\Omega\norm{u-\langle u\rangle}_{\dot H^{3/2}}
 \le C_\Omega\norm{\nabla u}_2^{1/2}\norm{\Delta u}_2^{1/2}.
 \label{eq:BMO-FGT}
\end{equation}
Consequently,
\begin{equation}
 \int_\tau^{T_*}\norm{u(t)}_{\BMO}\dd t\longrightarrow0
 \qquad(\tau\uparrow T_*),
 \label{eq:BMO-tail}
\end{equation}
where BMO is taken modulo spatial constants.

\section{The projected commutator identity}
\label{sec:commutator-appendix}

For comparison with the full relative response, the solenoidal block
\(c_\tau=w-\Pp z_\tau\) also has a positive accumulated identity. Taking the
\(L^2\) inner product of \eqref{eq:c-equation} with \(c_\tau\) and integrating
by parts gives
\begin{equation}
 \frac12\norm{c_\tau(t)}_2^2
 +\nu\int_\tau^t\norm{\nabla c_\tau}_2^2\dd\rho
 =\int_\tau^t\int_\Omega
 u_i(c_\tau)_j\partial_j(g_\tau)_i\dd x\dd\rho.
 \label{eq:c-ledger}
\end{equation}
Indeed,
\[
 -\int c_{\tau,i}(\partial_i u_j)(g_\tau)_j\dd x
 =\int u_j\partial_i(c_{\tau,i}(g_\tau)_j)\dd x
 =\int u_j(c_\tau)_i\partial_i(g_\tau)_j\dd x,
\]
Renaming the dummy indices gives \eqref{eq:c-ledger}. The full relative identity
\eqref{eq:main-ledger} is preferable: its advected factor is the fixed Oseen
solution \(w\), and the direct majorant loses no extra factor from
\(\norm{c_\tau}_2\le2F_\tau\).